\begin{document}

\title*{$t$-Martin boundary of killed random walks in the quadrant}
\author{C\'edric Lecouvey and Kilian Raschel}
\institute{C\'edric Lecouvey \at Laboratoire de Math\'ematiques et Physique Th\'eorique, Universit\'e de Tours, Parc de Grandmont, 37200 Tours, France. \email{Cedric.Lecouvey@lmpt.univ-tours.fr}
\and Kilian Raschel \at CNRS \& Pacific Institute for the Mathematical Sciences, Vancouver, British Columbia,
Canada \& Laboratoire de Math\'ematiques et Physique Th\'eorique, Universit\'e de Tours, Parc de Grandmont, 37200 Tours, France. \email{Kilian.Raschel@lmpt.univ-tours.fr}}

%
%
\maketitle

\abstract*{We compute the $t$-Martin boundary of two-dimensional small steps random walks killed at the boundary of the quarter plane. We further provide explicit expressions for the (generating functions of the) discrete $t$-harmonic functions. Our approach is uniform in $t$, and shows that there are three regimes for the Martin boundary.}

\abstract{We compute the $t$-Martin boundary of two-dimensional small steps random walks killed at the boundary of the quarter plane. We further provide explicit expressions for the (generating functions of the) discrete $t$-harmonic functions. Our approach is uniform in $t$, and shows that there are three regimes for the Martin boundary.}

\section{Introduction and main results}
\label{sec:Introduction}

\subsection*{Aim of this paper}
This work is concerned with discrete $t$-harmonic functions associated to Laplacian operators with Dirichlet conditions in the quarter plane. Let $\{p_{k,\ell}\}$ be non-negative numbers summing to $1$. Consider the associated discrete $t$-Laplacian, acting on functions $f$ defined on the quarter plane ${\bf N}^2=\{0,1,2,\ldots \}^2$ by
\begin{equation*}
     L_{t}(f)(i,j)= \sum_{k,\ell} p_{k,\ell} f(i+k,j+\ell)-t\cdot f(i,j),\qquad \forall i,j\geq 1.
\end{equation*}
Our aim is to characterize the functions $f=\{f(i,j)\}_{i,j\geq 0}$ which are
\begin{enumerate}
\item\label{prop1}$t$-harmonic in the interior of the quarter plane, i.e., $L_{t}(f)(i,j)=0$ for all $i,j\geq 1$;
\item\label{prop2}positive in the interior of the quarter plane: for all $i,j\geq 1$, $f(i,j)>0$;
\item\label{prop3}zero on the boundary and at the exterior of the quarter plane: for all $(i,j)\in{\bf Z}^2$ such that $i\leq0$ and/or $j\leq0$, $f(i,j)=0$.
\end{enumerate}
The probabilistic counterpart of this potential theory viewpoint is the following: $t$-harmonic functions satisfying to~\ref{prop1}--\ref{prop3} are $t$-harmonic for random walks (whose increments have the law $\{p_{k,\ell}\}$) killed at the boundary of the quarter plane.

\subsection*{Literature}


In general, it is a difficult problem to determine the Martin boundary (essentially, the set of harmonic functions) of a given class of Markov chains, especially for non-homogeneous processes (in our case, the inhomogeneity comes from the boundary of the quadrant). The $t$-Martin boundary plays a crucial role to determine the Martin boundary (i.e., the $t$-Martin boundary with $t=1$) of products of transition kernels \cite{PW,IR10}. Moreover, via the procedure of Doob $h$-transform, discrete harmonic functions have also applications to defining random processes conditioned on staying in given domains of ${\bf Z}^d$ (the latter processes arise a great interest in the mathematical community, as they appear in several distinct domains: quantum random walks \cite{Bi1,Bi3}, random matrices, non-colliding random walks \cite{LPP13}). Further details and motivations of considering the $t$-Martin boundary can be found in \cite[Introduction]{IR10}.

For non-zero drift random walks in cones, the Martin boundary has essentially been found for very particular cones, as half spaces ${\bf N}\times {\bf Z}^{d-1}$ and orthants ${\bf N}^d$. In \cite[Corollary 1.1]{IL10} it has been found in the case of ${\bf N}^2$ for random walks with exponential moments, using ratio limit theorems for local processes and large deviation techniques. The Martin boundary was proved to be homeomorphic to $[0,\pi/2]$. In \cite{KR11}, under the small steps and non-degeneration hypotheses, namely,
\begin{enumerate}[label={\rm (\alph{*})},ref={\rm (\alph{*})}]
\item\label{prop4}the $p_{k,\ell}$ are $0$ as soon as $\vert k\vert >1$ and/or $\vert \ell\vert >1$,
\item\label{prop5}in the (clockwise) list $p_{1,1},p_{1,0},p_{1,-1},p_{0,-1},p_{-1,-1},p_{-1,0},p_{-1,1},p_{0,1}$,
                 there are no three~consecutive zeros,
\end{enumerate}
the exact asymptotics of Green functions was obtained, and a similar result as in \cite{IL10} on the Martin boundary was derived. In \cite{IL10,KR11}, no explicit expressions for the harmonic functions were provided.
 
For random walks with zero drift, the results are rarer, and typically require a strong underlying structure: the random walks are cartesian products in \cite{PW}, they are associated with Lie algebras in \cite{Bi1,Bi3}, etc. Last but not least, knowing the harmonic functions for zero drift random walks in ${\bf N}^{d-1}$ is necessary for constructing harmonic functions of walks with drift in ${\bf N}^d$, see \cite{IR2}. The first systematical result was obtained in \cite{Ra14}: under~\ref{prop4}--\ref{prop6}, where~\ref{prop6} is the zero drift hypothesis
\begin{enumerate}[label={\rm (\alph{*})},ref={\rm (\alph{*})}]
\setcounter{enumi}{2}
\item\label{prop6}$\sum_{k,\ell}kp_{k,\ell}=\sum_{k,\ell}\ell p_{k,\ell}=0$,
\end{enumerate}
it was proved that there is a unique discrete harmonic function (up to multiplicative factors). In \cite{Ra14}, there is also an explicit expression for the generating function
\begin{equation}
\label{eq:generating_functions_harmonic_functions}
     H(x,y)=  \sum_{i,j\geq 1} f(i,j) x^{i-1}y^{j-1}
\end{equation}
of the values of the harmonic function. Finally, this uniqueness result is extended in \cite{BMS15} to a much larger class of transition probabilities and dimension.

The are less examples of studies of $t$-Martin boundary. One of them is \cite{IR10}, for reflected random walks in half-spaces.

\subsection*{Main results}
Our main results are on the structure of the $t$-Martin boundary (Theorem~\ref{thm:main_1}) and on the explicit expression of the $t$-harmonic functions (Theorem~\ref{thm:main_2}). 

Define $t_0$ by
\begin{equation}
\label{eq:def_t0}
     t_0 = \min_{a\in{\bf R}^2}\phi(a),
\end{equation}
where we have noted
\begin{equation}
\label{eq:def_phi}
     \phi (a) = \phi(a_1,a_2) = \sum_{k,\ell} p_{k,\ell} e^{k a_1}e^{\ell a_2}.
\end{equation}
Notice that $t_0\in (0,1]$, and $t_0=1$ if and only if~\ref{prop6} holds (Figure~\ref{fig:loc_t_0}).

\unitlength=0.6cm
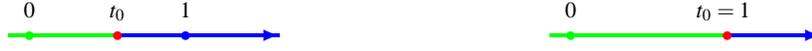
\begin{figure}[t]
\begin{center}
    \begin{picture}(0,1)   
    \thicklines
    \put(-9,0){\textcolor{green}{\line(1,0){2.35}}}
  \put(-6.65,0){\textcolor{blue}{\vector(1,0){3.65}}}
   \put(3,0){\textcolor{green}{\line(1,0){3.85}}}
        \put(6.85,0){\textcolor{blue}{\vector(1,0){2.15}}}
    \put(-8.66,0.4){$0$}
     \put(-6.75,0.4){$t_0$}
      \put(-5.2,0.4){$1$}
     {\put(-8.66,-0.13){\textcolor{green}{$\bullet$}}}
     {\put(-6.7,-0.13){\textcolor{red}{$\bullet$}}}
    {\put(-5.2,-0.13){\textcolor{blue}{$\bullet$}}}
    \put(3.34,0.4){$0$}
      \put(6.25,0.4){$t_0=1$}
     {\put(3.34,-0.13){\textcolor{green}{$\bullet$}}}
    {\put(6.8,-0.13){\textcolor{red}{$\bullet$}}}
     \end{picture}
\end{center}
\caption{Location of $t_0$ in the non-zero drift case (left) and in the zero drift case (right). There are three regimes for the $t$-Martin boundary: empty (green, left), reduced to one point (red, middle), homeomorphic to a segment (blue, right), see Theorem~\ref{thm:main_1}}
\label{fig:loc_t_0}
\end{figure}

\begin{theorem}
\label{thm:main_1}
For any random walk satisfying to~\ref{prop4}--\ref{prop5}, the $t$-Martin boundary is,
\begin{enumerate}
     \item\label{thm:main_1_1}for $t>t_0$, homeomorphic to a segment $S_t$ (with non-empty interior); 
     \item\label{thm:main_1_2}for $t=t_0$, reduced to one point;
     \item\label{thm:main_1_3}for $t<t_0$, empty.
\end{enumerate}     
\end{theorem}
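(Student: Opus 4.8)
The organizing principle of my proof would be an \emph{exponential change of measure} (a Doob transform) that collapses all three regimes onto the already understood case $t=1$. The starting observation is purely convex-analytic: since $a\mapsto\phi(a)$ is a finite sum of exponentials, hypothesis~\ref{prop5} makes it coercive and strictly convex on ${\bf R}^2$, so it attains its minimum value $t_0$ at a single point $a^\ast$. Hence the set
\[
   E_t=\{(x,y)\in(0,\infty)^2:\textstyle\sum_{k,\ell}p_{k,\ell}x^ky^\ell=t\}
\]
of positive ``harmonic exponentials'' $x^iy^j$ (those $x^iy^j$ solving~\ref{prop1} while ignoring the boundary, which happens exactly when $\sum_{k,\ell}p_{k,\ell}x^ky^\ell=t$) is empty for $t<t_0$, a single point for $t=t_0$, and, since $\{\phi\le t\}$ is then compact and convex, a smooth closed curve (topologically a circle) for $t>t_0$. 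This trichotomy is what ultimately produces the three cases of Theorem~\ref{thm:main_1}.

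For any $a\in{\bf R}^2$ put $\hat p_{k,\ell}=p_{k,\ell}e^{ka_1+\ell a_2}/\phi(a)$; this is again a probability distribution with the \emph{same} support, hence still satisfying~\ref{prop4}--\ref{prop5}, and its drift is $\nabla\phi(a)/\phi(a)$. The map $f\mapsto\hat f$, $\hat f(i,j)=f(i,j)e^{-ia_1-ja_2}$, is a bijection between functions satisfying~\ref{prop1}--\ref{prop3} for $\{p_{k,\ell}\}$ at level $t$ and those satisfying~\ref{prop1}--\ref{prop3} for $\{\hat p_{k,\ell}\}$ at level $t/\phi(a)$. Because it is multiplication by the fixed positive function $e^{-ia_1-ja_2}$, a short computation on the associated kernels shows that the two $t$-Martin kernels differ, as functions of their first variable, by a fixed positive factor ($\hat K_{1}(\cdot,y)=c\,e^{-ia_1-ja_2}K_{t}(\cdot,y)$); therefore the two Martin compactifications are homeomorphic and minimal functions correspond to minimal functions. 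The whole proof then reduces to choosing $a$ well in each regime.

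\textbf{Case $t=t_0$.} Take $a=a^\ast$. Then $t/\phi(a^\ast)=1$ and $\nabla\phi(a^\ast)=0$, so $\{\hat p_{k,\ell}\}$ has zero drift, i.e.\ satisfies~\ref{prop6}. By the uniqueness result of~\cite{Ra14} there is, up to a constant, a unique function satisfying~\ref{prop1}--\ref{prop3} for $\{\hat p_{k,\ell}\}$, so the $t_0$-Martin boundary is a single point. \textbf{Case $t>t_0$.} Choose $a\in E_t$ with $\nabla\phi(a)\in(0,\infty)^2$, which exists because the Gauss map of the convex curve $\{\phi=t\}$ is onto; then $\{\hat p_{k,\ell}\}$ has non-zero drift (pointing into the quadrant) and still satisfies~\ref{prop4}--\ref{prop5}, so by~\cite{KR11,IL10} its Martin boundary is homeomorphic to a segment with non-empty interior, and hence so is the $t$-Martin boundary. \textbf{Case $t<t_0$.} Here I would show directly that no function satisfies~\ref{prop1}--\ref{prop3}, so the boundary is empty. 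If $f>0$ were such a function for the killed kernel $P$, iterating~\ref{prop1} and keeping only the diagonal term gives $t^n f(i,j)\ge P^n\bigl((i,j),(i,j)\bigr)f(i,j)$, whence $t\ge\limsup_n P^n\bigl((i,j),(i,j)\bigr)^{1/n}=:\rho(P)$. Tilting by $a^\ast$ and telescoping the tilt factors along loops yields the exact identity $P^n\bigl((i,j),(i,j)\bigr)=t_0^{\,n}\,\hat P^n\bigl((i,j),(i,j)\bigr)$ for the zero-drift walk $\hat P$, so $\rho(P)=t_0$, the factor $\limsup_n\hat P^n((i,j),(i,j))^{1/n}=1$ because a killed zero-drift walk returns to its start subexponentially. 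Thus $t\ge t_0$, a contradiction.

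The step I expect to demand the most care is the non-existence when $t<t_0$: everything else is elementary convexity or a citation, whereas the equality $\rho(P)=t_0$ rests on the (standard but not off-the-shelf) lower bound that the killed zero-drift walk performs $n$-step excursions returning near its origin with only polynomially small probability. A secondary point is to verify that~\cite{KR11,IL10} apply verbatim to the tilted walk, which is precisely why I fix the tilt so the induced drift points into the quadrant. I finally note that producing the \emph{explicit} $t$-harmonic functions (the companion statement) cannot be done by tilting alone; it requires solving the boundary value problem for $H(x,y)$ uniformly in $t$, and that analytic work — not the topological statement above — is where the genuinely new computation lies.
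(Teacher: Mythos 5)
Your proposal is, for cases~\ref{thm:main_1_1} and~\ref{thm:main_1_2}, essentially the paper's second proof of Theorem~\ref{thm:main_1} (Section~\ref{sec:general_case}): your tilting $\hat p_{k,\ell}=p_{k,\ell}e^{\langle a,(k,\ell)\rangle}/\phi(a)$ and the bijection $f\mapsto e^{-\langle a,\cdot\rangle}f$ are exactly~\eqref{eq:pkla}, Proposition~\ref{prop:t-harmonic_second_approach} and Lemma~\ref{lem:change_harmonic}, and your citations after tilting (\cite{IL10,KR11} for non-zero drift, \cite{Ra14} for the minimizer $a^\ast$) coincide with the proof of Corollary~\ref{cor:extended}. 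One small simplification: your insistence on choosing $a$ with $\nabla\phi(a)\in(0,\infty)^2$ is not needed, since \cite[Corollary 1.1]{IL10} and \cite{KR11} hold for an arbitrary non-zero drift; the paper accordingly takes any $a_t$ on the level set $\{\phi=t\}$.

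The only place you genuinely depart from the paper is case~\ref{thm:main_1_3}, which the paper dispatches by citing \cite{Pr64}, and here your argument has one unproven crux. The iteration $t^n f(z)\ge P^n(z,z)f(z)$ and the telescoping identity $P^n(z,z)=t_0^n\hat P^n(z,z)$ are both correct (tilt factors cancel along loops), and they give the easy inequality $\rho(P)\le t_0$; but the entire content of the case is the asserted lower bound $\limsup_n\hat P^n(z,z)^{1/n}=1$ for the \emph{killed} zero-drift walk, which you leave as a ``standard but not off-the-shelf'' excursion estimate. In the form you invoke it (polynomially small probability of returning excursions) this is a local-limit-theorem-in-cones statement, considerably heavier than anything else in the proof. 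The gap can, however, be closed elementarily, without cone asymptotics: by the classical local CLT for the \emph{free} zero-drift walk $\hat Q$ one has $\limsup_n\hat Q^n(0,0)^{1/n}=1$, so given $\epsilon>0$ there is an $m$ with $\hat Q^m(0,0)\ge(1-\epsilon)^m$; since the steps are small (hypothesis~\ref{prop4}), any loop of length $m$ stays within distance $m$ of its starting point, so taking $z=(m+1,m+1)$ this loop never meets the boundary and $P^m(z,z)\ge t_0^m(1-\epsilon)^m$; supermultiplicativity $P^{km}(z,z)\ge P^m(z,z)^k$, combined with your iteration applied at this particular $z$ (legitimate, as $f>0$ at every interior point), yields $t\ge t_0(1-\epsilon)$ for every $\epsilon$, a contradiction. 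With this patch your case~\ref{thm:main_1_3} is complete and in fact more self-contained than the paper's: \cite{Pr64} by itself only gives emptiness below the convergence norm $\rho(P)$, so the identification $\rho(P)=t_0$ --- precisely what your argument supplies --- is implicitly needed there as well.
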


For $t=1$ and non-zero drift, Theorem~\ref{thm:main_1}~\ref{thm:main_1_1} is proved in \cite{IL10,KR11}; for $t=1$ and zero drift, Theorem~\ref{thm:main_1}~\ref{thm:main_1_2} is obtained in \cite[Theorem 12]{Ra14}. Theorem~\ref{thm:main_1}~\ref{thm:main_1_3} follows from general results on Markov kernels, see, e.g., \cite{Pr64}.

We shall give two proofs of Theorem~\ref{thm:main_1}. The first one is based on a functional equation satisfied by the generating function~\eqref{eq:generating_functions_harmonic_functions} of any $t$-harmonic function (see~\eqref{eq:functional_equation}). This method (solving the functional equation via complex analysis) was introduced in \cite{Ra14} for the case $t=1$ and zero drift. As we shall see, it has the following advantages: it works for any $t$; the critical value $t_0$ appears very naturally; finally, it provides an expression for the $t$-harmonic functions (see our Theorem~\ref{thm:main_2}). 

The second proof is based on an exponential change of measure, which allows to reduce the general case to the case $t=1$. In particular, with this second method, Theorem~\ref{thm:main_1} can be extended to a much larger class than those satisfying to~\ref{prop4}--\ref{prop5}: namely, Theorem~\ref{thm:main_1}~\ref{thm:main_1_1} to the class of random walks whose increments have exponential moments (thanks to \cite[Corollary 1.1]{IL10}), and Theorem~\ref{thm:main_1}~\ref{thm:main_1_2} to random walks with bounded symmetric jumps (thanks to \cite[Theorem 1]{BMS15}).

Our second theorem provides an explicit expression for the generating function~\eqref{eq:generating_functions_harmonic_functions}. We recall that a $t$-harmonic function $f > 0$ is said to be minimal if for any $t$-harmonic function $g > 0$, the inequality $g\leq f$ implies the equality $g= c\cdot f$, for some $c > 0$. Notice that
\begin{equation}
\label{eq:generating_functions_harmonic_functions_uni}
     H(x,0)= \sum_{i\geq 1} f(i,1) x^{i-1},
     \qquad H(0,y)= \sum_{j\geq 1} f(1,j) y^{j-1}
\end{equation}
are the generating functions of the values of the harmonic function above/on the right of the coordinate axes. Introduce the second order (in $x$ and $y$) polynomial, called the kernel,
\begin{equation}
\label{eq:def_L}
     L(x,y)= x y\left( \sum_{-1\leq k,\ell \leq 1}p_{k,\ell }x^{-k} y^{-\ell}  -t\right).
\end{equation}
The kernel is fully characterized by the jumps $\{p_{k,\ell}\}$.

\begin{theorem}
\label{thm:main_2}
Let $\{p_{k,\ell}\}$ be any jumps satisfying to~\ref{prop4}--\ref{prop5}, and let $S_t$ be the segment in Theorem~\ref{thm:main_1}.

In case~\ref{thm:main_1_1} {\rm(}$t\in(t_0,\infty)${\rm)}, there exists a universal function $w$ {\rm(}see~\eqref{eq:expression_w} and~\eqref{eq:expression_u}{\rm)}, i.e., a function depending only on the kernel $L$ {\rm(}and therefore also on $t${\rm)}, such that for any minimal $t$-harmonic function $\{f(i,j)\}$, there exist $p\in S_t$  and two constants $\alpha,\beta$ {\rm(}see~\eqref{eq:expression_alpha} and~\eqref{eq:expression_beta}{\rm)}, with
\begin{equation*}
     H(x,0)= \frac{1}{L(x,0)}\left(\frac{\alpha}{w(x)-w(p)}+\beta\right).
\end{equation*}

In case~\ref{thm:main_1_2} {\rm(}$t=t_0${\rm)}, the $t$-harmonic function is unique, up to multiplicative factors. Its expression can be obtained either as the limit of the above expression when $t\to t_0$, or directly with Equations~\eqref{eq:expression_w} and~\eqref{eq:expression_u_0}.
\end{theorem}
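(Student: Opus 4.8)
The plan is to solve the functional equation satisfied by $H(x,y)$ and extract the stated formula for $H(x,0)$. First I would derive the kernel equation: substituting the series~\eqref{eq:generating_functions_harmonic_functions} into the $t$-harmonicity condition~\ref{prop1} and collecting terms, the boundary conditions~\ref{prop3} annihilate all contributions except those supported on the two axes, yielding a relation of the shape
\begin{equation*}
     L(x,y)\,H(x,y) = L(x,0)\,H(x,0) + L(0,y)\,H(0,y) - L(0,0)\,H(0,0),
\end{equation*}
where $L$ is the kernel~\eqref{eq:def_L}. This is the functional equation~\eqref{eq:functional_equation} announced in the text. The central object is the algebraic curve $\{(x,y):L(x,y)=0\}$; since $L$ is quadratic in each variable (by the small-steps hypothesis~\ref{prop4}), for fixed $x$ there are two branches $Y_\pm(x)$, and the curve is of genus $0$ or $1$.

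Next, working in case~\ref{thm:main_1_1} ($t>t_0$), I would restrict the functional equation to the kernel: whenever $L(x,y)=0$ the left-hand side vanishes, so on the curve the two unknown univariate generating functions are linked by
\begin{equation*}
     L(x,0)\,H(x,0) + L(0,y)\,H(0,y) = L(0,0)\,H(0,0).
\end{equation*}
This is a boundary-value problem of Riemann--Carleman type. The strategy, following the method of~\cite{Ra14}, is to uniformize the curve and to introduce the conformal gluing function $w$ of~\eqref{eq:expression_w}--\eqref{eq:expression_u}: $w$ is chosen so that it takes equal values at the two points of the curve that share the same $x$-coordinate (the Galois involution), which converts the two-unknowns problem into a scalar Riemann boundary-value problem for $L(x,0)H(x,0)$ as a function of $w$. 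Solving it shows that $L(x,0)H(x,0)$ must be a rational function of $w(x)$ with a single simple pole; positivity~\ref{prop2} and minimality force that pole to lie at a value $w(p)$ with $p\in S_t$, and the residue and constant part are exactly the $\alpha,\beta$ of~\eqref{eq:expression_alpha}--\eqref{eq:expression_beta}. Dividing by $L(x,0)$ yields the claimed expression.

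For case~\ref{thm:main_1_2} ($t=t_0$), I would argue that as $t\downarrow t_0$ the segment $S_t$ degenerates to the single point where $\phi$ attains its minimum~\eqref{eq:def_t0}, so the admissible parameter $p$ has no freedom left; passing to the limit in the formula (or solving the degenerate boundary-value problem directly, now with the gluing function specialized via~\eqref{eq:expression_u_0}) gives the unique harmonic function, which matches the uniqueness already known from Theorem~\ref{thm:main_1}~\ref{thm:main_1_2}.

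The main obstacle I anticipate is the construction and control of the gluing function $w$ uniformly in $t$: one must verify that the curve $L(x,y)=0$ admits a well-defined conformal map onto a standard domain, that $w$ has the correct behavior (in particular its finite branch points and the glued value $w(p)$ depend analytically on $t$), and that the regime thresholds at $t_0$ correspond precisely to the geometric degeneration of this uniformization. Establishing that only the one-parameter family indexed by $p\in S_t$ survives the positivity constraint—i.e.\ that no higher-order poles or additional analytic terms are compatible with $f(i,j)>0$—is the delicate analytic heart of the argument; the rest is bookkeeping of the explicit rational form.
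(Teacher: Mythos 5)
Your overall route is the same as the paper's: derive the functional equation \eqref{eq:functional_equation}, restrict it to the kernel curve to get a Carleman-type boundary condition on $\mathcal M$ (Lemma~\ref{lem:boundary_condition}), introduce a conformal gluing function $w$, compensate the pole by $\frac{\alpha}{w(x)-w(p)}+\beta$, and identify the constants by expansion at $0$. The genuine gap is that the step you label ``the delicate analytic heart'' and leave unproved is precisely the content of the paper's Lemmas~\ref{lem:exp_growth_t=t}, \ref{lem:exp_growth_t=t_bis} and~\ref{lem:regularity_gf}, and without it the boundary value problem cannot even be posed: a priori $H(x,0)$ is only a power series with some unknown radius of convergence, so one must show (i) that this radius equals $p$ for some $p\in[x_2,X(y_2)]$ --- this is where minimality enters, through the exponential-growth description of minimal $t$-harmonic functions taken from \cite{IL10} and transferred from $t=1$ to general $t$ by the exponential change of measure of Section~\ref{sec:general_case}, not through a soft ``positivity and minimality force the pole'' assertion --- and (ii) that $H(x,0)$ continues meromorphically to all of $\mathcal D_\mathcal M$ with $p$ as its unique singularity, which the paper obtains by using the functional equation evaluated at $(x,Y_0(x))$ as a continuation formula; even the legitimacy of that evaluation on a non-empty set is non-trivial and rests on Lemma~\ref{lem:curve_included}. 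The bound on the pole order (at most $2$, via positivity and \cite[Lemma 11]{Ra14}) is a further ingredient you allude to but do not supply.

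Two smaller inaccuracies. First, the gluing function must identify the two points of the kernel curve sharing the same $y$-coordinate, i.e.\ $w(X_0(y))=w(X_1(y))$ for $y\in[y_1,y_2]$, equivalently $w(x)=w(\overline x)$ on $\mathcal M$; identifying points with the same $x$-coordinate, as you wrote, is vacuous for a function of $x$ alone and would not eliminate the unknown $H(0,y)$. Second, ``a single simple pole'' is not correct uniformly in $p$: at the endpoint $p=X(y_2)$ the pole of $L(x,0)H_p(x,0)$ has order $2$, and consistently $\frac{1}{w(x)-w(p)}$ also has a double pole there, because the gluing relation forces $w'(X(y_2))=0$; the final formula survives, but your derivation as stated does not cover this case. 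Finally, for $t=t_0$ your passage to the limit $t\downarrow t_0$ is not justified (one would need uniform control of $w$, $\alpha$, $\beta$ in $t$); the paper instead adapts the direct argument of \cite[Sections 3.1--3.3]{Ra14}.
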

A similar expression holds for $H(0,y)$, and finally the functional equation~\eqref{eq:functional_equation} gives the announced expression for $H(x,y)$. Theorem~\ref{thm:main_2} will be stated in full details in Section~\ref{sec:resolution}.

\subsection*{Organization of the paper}
In Section~\ref{sec:BVP} we state the functional equation~\eqref{eq:functional_equation}, we introduce some notation, we compute the growth of $t$-harmonic functions (Lemmas~\ref{lem:exp_growth_t=t} and~\ref{lem:exp_growth_t=t_bis}), and we finally show that the generating function~\eqref{eq:generating_functions_harmonic_functions_uni} satisfies a simple boundary value problem (Lemma~\ref{lem:boundary_condition}). In Section~\ref{sec:resolution} we solve this boundary value problem, by introducing the notion of conformal gluing functions (Definition~\ref{def:conformal_gluing}). In Section~\ref{sec:general_case} we extend our Theorem~\ref{thm:main_1} to a larger class of jumps $\{p_{k,\ell}\}$, by making an exponential change of jumps (Corollary~\ref{cor:extended}). In Section~\ref{sec:misc} we propose some remarks and a conjecture around our results. In Appendix~\ref{sec:expression_w} we give an explicit expression for the conformal gluing function $w$ of Theorem~\ref{thm:main_2}.

Our paper is self-contained. However, for some technical aspects of our work, specially those concerning random walks in the quarter plane, we decided to state the results without proof, referring the readers to the large existing literature (see, e.g., \cite{FIM,FR11,KM98,KR11,Ra14}). 

\section{Boundary value problem for the generating functions of harmonic functions}
\label{sec:BVP}

Our approach extends the one in \cite{Ra14}, and consists in using the generating function $H(x,y)$ (see~\eqref{eq:generating_functions_harmonic_functions}) of the harmonic function. The key point is that this function $H(x,y)$ satisfies the functional equation
\begin{equation}
\label{eq:functional_equation}
     L(x,y)H(x,y) = L(x,0) H(x,0) + L(0,y) H(0,y)-L(0,0) H(0,0),
\end{equation}
where $L$ is defined in~\eqref{eq:def_L}.

The proof of~\eqref{eq:functional_equation} simply comes from multiplying the relation $L_{t}(f)(i,j)=0$ by $x^{i}y^{j}$ and then from summing w.r.t.\ $i,j\geq 1$. In~\eqref{eq:functional_equation}, the variables $x$ and $y$ can be seen as formal variables, but they will mostly be used as complex variables.

This section is organized as follows: we first study important properties of the kernel~\eqref{eq:def_L}. Then we are interested in the regularity (as complex functions) of $H(x,0)$ and $H(0,y)$, which is related to the exponential growth of harmonic functions. Then we state a boundary value problem satisfied by these generating functions.

\subsection{Notations}
The kernel $L(x,y)$ in~\eqref{eq:def_L} can also be written
     \begin{equation}
     \label{eq:alternative_definition_L}
          L(x,y) = \alpha (x) y^{2}+ \beta (x) y + \gamma (x) = \widetilde{\alpha }(y) x^{2}+
          \widetilde{\beta }(y) x + \widetilde{\gamma }(y),
     \end{equation}
where (without loss of generality, we assume that $p_{0,0}=0$)
          \begin{equation*}
           \label{def_a_b_c}
          \left\{\begin{array}{rll}
\alpha (x)&=&  p_{-1,-1}x^{2}+ p_{0,-1}x+ p_{1,-1},\\
                \beta(x)  &=& p_{-1,0}x^{2}- tx+ p_{1,0},\\
               \gamma(x)  &=& p_{-1,1}x^{2}+ p_{0,1}x+ p_{1,1},\\
               \widetilde{\alpha }(y)  &=&  p_{-1,-1}y^{2}+ p_{-1,0}y+ p_{-1,1},\\
               \widetilde{\beta}(y)  &=& p_{0,-1}y^2- ty+ p_{0,1},\\
               \widetilde{\gamma}(y)  &=& p_{1,-1}y^{2}+ p_{1,0}y+ p_{1,1}.\end{array}\right.
     \end{equation*}
We also define
     \begin{equation}
     \label{def_d}
          \delta (x)=\beta (x)^{2}-4\alpha (x) \gamma(x),
          \qquad \widetilde{\delta }(y)=
          \widetilde{\beta }(y)^{2}-4
          \widetilde{\alpha }(y)\widetilde{\gamma }(y),
     \end{equation}
which are the discriminants of the polynomial $L(x,y)$ as a function of $y$ and $x$, respectively. 
The following facts regarding the polynomial $\delta$ are proved in \cite[Chapter 2]{FIM} for $t=1$, their proof for general values of $t\geq t_0$ ($t_0$ being defined in~\eqref{eq:def_t0}) would be similar: under~\ref{prop4}--\ref{prop5}, $\delta $ has degree (in $x$) three or four. We denote its roots by  $\{x_\ell\}_{1\leq \ell\leq 4}$, with
\begin{equation}
\label{eq:properties_branch_points_1}
          \vert x_1\vert\leq \vert x_2\vert \leq \vert x_3\vert\leq \vert x_4\vert,
     \end{equation}
and $x_4=\infty$ if $\delta $ has degree three. We have $x_1\in[-1,1)$, $x_4\in(1,\infty) \cup \{\infty\}\cup (-\infty,-1]$, and $x_2,x_3>0$. Further $\delta(x)$ is negative on ${\bf R}$ if and only if $x\in (x_1,x_2)\cup (x_3,x_4)$. The polynomial $\widetilde \delta$ in~\eqref{def_d} and its roots $\{y_\ell\}_{1\leq \ell\leq 4}$ satisfy similar properties.

In what follows, we define the algebraic functions $X(y)$ and $Y(x)$ by $L(X(y),y)=0$ and $L(x,Y(x))=0$. With~\eqref{eq:alternative_definition_L} and~\eqref{def_d} we have the obvious expressions
\begin{equation}
\label{eq:expression_X_Y}
          X(y)=\frac{-\widetilde \beta(y)\pm {\sqrt{\widetilde\delta(y)}}}{2 \widetilde \alpha(y)},
          \qquad Y(x)=\frac{-\beta(x)\pm \sqrt{\delta(x)}}{2 \alpha(x)}.
\end{equation}
The functions $X(y)$ and $Y(x)$ both have two branches, called $X_0,X_1$ and $Y_0,Y_1$, which are meromorphic on the cut planes ${\bf C}\setminus ([y_1,y_2]\cup[y_3,y_4])$ and ${\bf C}\setminus([x_1,x_2]\cup[x_3,x_4])$, respectively. The numbering of the branches can be chosen so as to satisfy $\vert X_0(y)\vert \leq \vert X_1(y)\vert$ (resp.\ $\vert Y_0(x)\vert \leq \vert Y_1(x)\vert$) on the whole of the cut planes, see \cite[Theorem 5.3.3]{FIM}.

Note that except $\alpha,\gamma,\widetilde\alpha,\widetilde\gamma$, all quantities defined above depend on $t$.

\subsection{Growth of $t$-harmonic functions}
By definition, the exponential growth of a sequence $\{u_i\}$ of positive real numbers is $\limsup_{i\to\infty} u_i^{1/i}$. We first identify (Lemma~\ref{lem:exp_growth_t=1}) the exponential growth of $\{f(i,1)\}$ and $\{f(1,j)\}$ for $t=1$, and then (Lemma~\ref{lem:exp_growth_t=t}) we treat the general case in $t$.  

\subsubsection*{First case: $t=1$}

Consider on ${\bf R}^2$ the function $\phi$ defined by~\eqref{eq:def_phi}, and define the set $D_1=\{a\in {\bf R}^2 : \phi(a)\leq 1\}$ and its boundary $\partial D_1=\{a\in {\bf R}^2 : \phi(a)= 1\}$.  If the drift is zero (hypothesis~\ref{prop6}), the set $D_1$ is reduced to $\{0\}$, see \cite[Proposition 4.3]{He63}. If not, it is homeomorphic to the unit disc. More precisely, for $a\in \partial D_1$, let $q(a)=\frac{\nabla \phi(a)}{\vert \nabla \phi(a)\vert}\in{\bf S}^1$ (the unit circle). If the drift is non-zero, the function $q$ is a homeomorphism between $\partial D_1$ and ${\bf S}^1$, see \cite[Proposition 4.4]{He63} or \cite[Introduction]{IL10}. Define finally ${\bf S}^1_+={\bf S}^1\cap {\bf R}_+^2$ as well as  $\Gamma^+_1=\{a\in \partial D_1 : q(a)\in {\bf S}^1_+\}$. The following result is proved in \cite{IL10}.

\begin{lemma}[\cite{IL10}]
\label{lem:exp_growth_t=1}
For any non-zero minimal $1$-harmonic function $f$, there exists $a=(a_1,a_2)\in \Gamma^+_1$ such that the exponential growth of $\{f(i,1)\}$ {\rm(}resp.\ $\{f(1,j)\}${\rm)} is $a_1$ {\rm(}resp.\ $a_2${\rm)}.
\end{lemma}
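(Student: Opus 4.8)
The plan is to identify the non-zero minimal $1$-harmonic functions with the points of the minimal Martin boundary of the killed walk, and then to read off the axial exponential growth directly from the asymptotics of the associated Martin kernels established in \cite{IL10}. The starting observation is that the functions $(i,j)\mapsto e^{a_1 i+a_2 j}$ with $a\in\partial D_1$ are exactly the multiplicative $1$-harmonic functions of the non-killed walk; the Dirichlet problem~\ref{prop1}--\ref{prop3} then selects, among these exponentials, those that survive after being adapted to the quadrant with a vanishing boundary condition.

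First I would recall the geometric meaning of $\Gamma^+_1$. By \cite[Proposition 4.4]{He63} the map $q$ is a homeomorphism from $\partial D_1$ onto ${\bf S}^1$, so $\Gamma^+_1=q^{-1}({\bf S}^1_+)$ is an arc. For $a\in\partial D_1$, performing the Doob $h$-transform with $h(i,j)=e^{a_1 i+a_2 j}$ turns the walk into a new random walk whose drift is proportional to $\nabla\phi(a)$; the condition $q(a)\in{\bf S}^1_+$ means precisely that this drift points into the open quadrant. This is the necessary and sufficient condition for the $h$-transformed walk to escape to infinity inside ${\bf N}^2$, hence for the corresponding Dirichlet harmonic function to exist, to be positive in the interior, and to be minimal. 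This is the content of the Martin boundary classification of \cite[Corollary 1.1]{IL10}, which shows that the minimal Martin boundary is homeomorphic to $[0,\pi/2]$ and is parametrized exactly by $a\in\Gamma^+_1$.

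Next I would extract the growth. For the minimal harmonic function $f_a$ attached to $a\in\Gamma^+_1$, the Green function and Martin kernel asymptotics of \cite{IL10} give $f_a(i,j)=e^{a_1 i+a_2 j}\,g_a(i,j)$, where $g_a$ is subexponential along each coordinate ray. Restricting to the ray $j=1$ and computing $\limsup_{i\to\infty} f_a(i,1)^{1/i}$ therefore annihilates the factor $g_a$ and the bounded contribution attached to the second coordinate, leaving exactly the exponential growth recorded in the statement; the symmetric computation on the ray $i=1$ yields the value associated with $a_2$.

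The main obstacle is this last step, namely controlling the correction $g_a$ along a boundary-adjacent ray. A priori the decay rate of $f_a$ along the axis $j=1$ could be dictated by a boundary-modified large-deviation rate rather than by the bulk rate encoded in $\phi$; ruling this out, and checking that $g_a$ is genuinely subexponential in $i$ uniformly near the boundary, is exactly where the sharp local limit theorems and large deviation estimates of \cite{IL10} are needed. Once these asymptotics are granted, the identification of the growth with the coordinates of $a\in\Gamma^+_1$ is immediate.
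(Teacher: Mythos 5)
Your core argument---identifying the minimal $1$-harmonic functions with the minimal Martin boundary classified in \cite{IL10}, and reading the axial growth off the structure $f_a(i,j)=e^{a_1i+a_2j}g_a(i,j)$ with $g_a$ subexponential---is essentially the route the paper itself takes (it invokes Equation (1.3) of \cite{IL10}, which gives the structure of any minimal harmonic function). However, there is a genuine gap: your proof silently assumes the drift is non-zero. Every ingredient you use fails in the zero-drift case: \cite[Proposition 4.4]{He63} gives the homeomorphism $q:\partial D_1\to{\bf S}^1$ only for non-zero drift (if the drift is zero, $D_1=\{0\}$ and $q$ is not even defined, since $\nabla\phi(0)=0$), and the Martin boundary classification of \cite[Corollary 1.1]{IL10} is likewise established under a non-zero drift hypothesis. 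Yet the lemma is stated for any walk satisfying \ref{prop4}--\ref{prop5}, including zero-drift walks, and this degenerate case is not decorative: the paper deduces the general-$t$ statement (Lemma~\ref{lem:exp_growth_t=t}) from the $t=1$ case by an exponential change of measure, and at the critical value $t=t_0$ the transformed walk has \emph{zero} drift, so Theorem~\ref{thm:main_1}~\ref{thm:main_1_2} rests precisely on the case your argument omits. The paper closes this case by observing that for zero drift there is no exponential growth, i.e., $a_1=a_2=0$, which is guaranteed by \cite[Lemma 2]{Ra14}; your proof needs an analogous supplement.

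A secondary, more minor imprecision: you assert that $q(a)\in{\bf S}^1_+$ means the transformed drift ``points into the open quadrant'' and that this is necessary and sufficient for the Dirichlet-harmonic function to exist and be minimal. But ${\bf S}^1_+={\bf S}^1\cap{\bf R}_+^2$ is a \emph{closed} arc containing $(1,0)$ and $(0,1)$; at the two endpoints of $\Gamma^+_1$ the transformed drift is parallel to a coordinate axis, the transformed walk does not escape to infinity in the interior in your sense, and nevertheless minimal harmonic functions do exist there (they carry an extra polynomial factor, compare the endpoint cases $p=x_2$ and $p=X(y_2)$ in~\eqref{eq:expression_harmonic_coefficients}). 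Since you ultimately defer to \cite[Corollary 1.1]{IL10} for the actual classification, this does not break the argument, but the heuristic as stated would wrongly exclude the endpoints of the boundary arc.
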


(And reciprocally, any $a\in \Gamma^+_1$ is the growth of a minimal $1$-harmonic function.) Lemma~\ref{lem:exp_growth_t=1} follows from Equation (1.3) in \cite{IL10}, which gives the structure of any minimal harmonic function. It holds a priori only in the case of a non-zero drift, but it turns out to be also true in the zero drift case, as there is then no exponential growth, i.e., $a_1=a_2=0$, which is guaranteed by \cite[Lemma 2]{Ra14}.

\subsubsection*{General case in $t$} We introduce $D_t=\{a\in {\bf R}^2 : \phi(a)\leq t\}$ as well as (with obvious notation) $\partial D_t$ and $\Gamma^+_t$.

The function $\phi$ is strictly convex on ${\bf R}^2$, and due to the hypothesis~\ref{prop5} it admits a global minimum on ${\bf R}^2$. Let $t_0$ as in~\eqref{eq:def_t0}.
Note that $t_0\leq 1$ (evaluate $\phi$ at $0$) and that $t_0=1$ if and only if the drift is zero (see \cite[Proposition 4.3]{He63}). The following result extends Lemma~\ref{lem:exp_growth_t=1} to $t$-harmonic functions.
\begin{lemma}
\label{lem:exp_growth_t=t}
Let $t\geq t_0$. For any non-zero minimal $t$-harmonic function $f$, there exists $a\in \Gamma^+_t$ such that the exponential growth of $\{f(i,1)\}$ {\rm(}resp.\ $\{f(1,j)\}${\rm)} is $a_1$ {\rm(}resp.\ $a_2${\rm)}.
\end{lemma}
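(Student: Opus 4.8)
We have Lemma~\ref{lem:exp_growth_t=1} established for $t=1$, which identifies the exponential growth of the boundary values $\{f(i,1)\}$ and $\{f(1,j)\}$ of minimal $1$-harmonic functions in terms of a point $a \in \Gamma_1^+$. We want to extend this to all $t \geq t_0$. The paper itself flags the "exponential change of measure" technique in the discussion of the second proof of Theorem~\ref{thm:main_1}. This is exactly the tool to use here.

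**The plan.** I would reduce the general-$t$ case to the $t=1$ case via a multiplicative (exponential) change of the function. The key algebraic observation is that if $f$ is $t$-harmonic for the jumps $\{p_{k,\ell}\}$, then for a fixed $b=(b_1,b_2)\in{\bf R}^2$ the function $g(i,j)=e^{b_1 i}e^{b_2 j} f(i,j)$ is harmonic (i.e. $1$-harmonic) for a new family of jumps. Indeed, plugging $g$ into the Laplacian, one computes that $g$ satisfies $\sum_{k,\ell}\widetilde p_{k,\ell}\, g(i+k,j+\ell) - \widetilde t\, g(i,j)=0$ where $\widetilde p_{k,\ell} = p_{k,\ell}\,e^{-k b_1}e^{-\ell b_2}$ and $\widetilde t = t/\phi(b)\cdot(\text{normalization})$. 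The point is to choose $b$ so that $\sum_{k,\ell}\widetilde p_{k,\ell} = 1$, i.e. so that $\phi(b)=t$: this is possible precisely because $t\geq t_0=\min\phi$ and $\phi$ is strictly convex (so its range is $[t_0,\infty)$). With this choice the new jumps $\{\widetilde p_{k,\ell}\}$ are nonnegative and sum to $1$, and $g$ becomes a genuine $1$-harmonic function for the tilted walk. The zero-boundary and positivity conditions~\ref{prop2}--\ref{prop3} are preserved under multiplication by the strictly positive exponential factor, so $g$ is a legitimate $1$-harmonic function in the sense of the lemma.

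**Transporting minimality and reading off the growth.** Next I would check two compatibility facts. First, minimality is preserved by the bijection $f\mapsto g$: since $g\leq g'$ for tilted functions is equivalent to $f\leq f'$ (the exponential factor is the same strictly positive weight for both), $f$ minimal $t$-harmonic if and only if $g$ minimal $1$-harmonic. Second, and crucially, the exponential growth transforms additively in the exponent: if $\{g(i,1)\}$ has growth $\widetilde a_1$ (by Lemma~\ref{lem:exp_growth_t=1} applied to the tilted walk, with $\widetilde a\in\widetilde\Gamma_1^+$ the corresponding point for the tilted $\phi$), then $\{f(i,1)\}=\{e^{-b_1 i}g(i,1)\}$ has growth $a_1 = \widetilde a_1 - b_1$, and similarly $a_2=\widetilde a_2-b_2$. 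It remains to verify that the resulting $a=(a_1,a_2)$ lies in $\Gamma_t^+$. Here one uses that the tilted free-energy $\widetilde\phi(\widetilde a)=\sum\widetilde p_{k,\ell}e^{k\widetilde a_1}e^{\ell\widetilde a_2}$ relates to the original $\phi$ by $\widetilde\phi(\widetilde a)=\phi(\widetilde a - b)/\phi(b)\cdot(\cdots)$; the condition $\widetilde a\in\partial\widetilde D_1$ (i.e. $\widetilde\phi(\widetilde a)=1$) translates back to $\phi(a)=t$, i.e. $a\in\partial D_t$, and the gradient direction condition $q(\widetilde a)\in{\bf S}^1_+$ is preserved since $\nabla_a\phi(a)$ and $\nabla_{\widetilde a}\widetilde\phi(\widetilde a)$ are positively proportional under the shift $\widetilde a = a+b$. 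Hence $a\in\Gamma_t^+$, as required.

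**The main obstacle.** The routine verifications (that the tilt turns $t$-harmonicity into $1$-harmonicity, that positivity and the boundary conditions survive, that growth shifts by $b$) are straightforward. The delicate point is the bookkeeping of the map $\Gamma_1^+ \leftrightarrow \Gamma_t^+$: one must confirm that the tilted quantities $\widetilde\phi$, $\widetilde D_1$, $\widetilde\Gamma_1^+$ really do correspond, under the affine shift $a\mapsto a+b$ and the renormalization by $\phi(b)=t$, to $\phi$, $D_t$, $\Gamma_t^+$, and in particular that the sector condition $q(a)\in{\bf S}^1_+$ is exactly preserved. I expect the gradient-direction matching to be the step demanding the most care, since it requires tracking how the normal direction to the level set $\partial D_t$ behaves under tilting; the strict convexity of $\phi$ (which guarantees a well-defined normal and the homeomorphism $q$) is what makes this clean. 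Once the correspondence $\Gamma_1^+\cong\Gamma_t^+$ is pinned down, the lemma follows immediately from Lemma~\ref{lem:exp_growth_t=1} applied to the tilted walk.
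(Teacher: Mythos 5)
Your proposal is correct and follows essentially the same route as the paper: the paper proves Lemma~\ref{lem:exp_growth_t=t} by invoking the exponential change of parameters of Section~\ref{sec:general_case} (see~\eqref{eq:pkla},~\eqref{eq:new_harmonic} and Lemma~\ref{lem:change_harmonic}), reducing to Lemma~\ref{lem:exp_growth_t=1} for the tilted walk, exactly as you do. The only point to fix is a sign convention: with your tilt $g(i,j)=e^{\langle b,(i,j)\rangle}f(i,j)$ and weights $\widetilde p_{k,\ell}=p_{k,\ell}e^{-\langle b,(k,\ell)\rangle}t^{-1}$, the normalization condition is $\phi(-b)=t$ rather than $\phi(b)=t$, which is solvable for the same reason ($t\geq t_0=\min\phi$).
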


(And reciprocally, any $a\in \Gamma^+_t$ is the growth of a minimal $t$-harmonic function.) Lemma~\ref{lem:exp_growth_t=t} could be proved along the same lines as Lemma~\ref{lem:exp_growth_t=1}, but it can also be obtained thanks to the exponential change of the parameters $\{p_{i,j}\}$ presented in Section~\ref{sec:general_case}. As Lemma~\ref{lem:exp_growth_t=1}, Lemma~\ref{lem:exp_growth_t=t} holds for any value of the drift.

\subsubsection*{Reformulation in terms of the kernel}

Lemma~\ref{lem:exp_growth_t=t} can be reformulated as follows, in terms of quantities related to the kernel~\eqref{eq:def_L}. This will be more convenient for our analysis.
\begin{lemma}
\label{lem:exp_growth_t=t_bis}
Let $t\geq t_0$. For any non-zero minimal $t$-harmonic function $f$, there exists $p\in[x_2,X(y_2)]$ {\rm(}resp.\ $p'\in[y_2,Y(x_2)]${\rm)} with $p'=Y_0(p)$ {\rm(}or $p=X_0(p')${\rm)}, such that the exponential growth of $\{f(i,1)\}$ {\rm(}resp.\ $\{f(1,j)\}${\rm)} is $1/p$ {\rm(}resp.\ $1/p'${\rm)}. 
\end{lemma}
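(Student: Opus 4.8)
The goal is to translate the exponential growth statement of Lemma~\ref{lem:exp_growth_t=t}, phrased in terms of a point $a\in\Gamma_t^+$ on the level curve $\partial D_t$, into the kernel-theoretic statement of Lemma~\ref{lem:exp_growth_t=t_bis}, phrased in terms of a point $p\in[x_2,X(y_2)]$ with $p'=Y_0(p)$. The plan is to exhibit an explicit bijection between $\Gamma_t^+$ and the segment $[x_2,X(y_2)]$ (resp.\ $[y_2,Y(x_2)]$), under which the growth rate $a_1$ corresponds to $1/p$ and $a_2$ to $1/p'$, and then simply transport Lemma~\ref{lem:exp_growth_t=t} through this correspondence.

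\begin{proof}[Proof sketch]
The key observation is that the level set $\partial D_t=\{a\in{\bf R}^2:\phi(a)=t\}$ and the real zero set of the kernel $L$ describe the same curve under the substitution $x=e^{-a_1}$, $y=e^{-a_2}$. Indeed, from the definitions~\eqref{eq:def_phi} and~\eqref{eq:def_L}, the condition $\phi(a_1,a_2)=t$ reads $\sum_{k,\ell}p_{k,\ell}e^{ka_1}e^{\ell a_2}=t$, which is exactly $\sum_{-1\le k,\ell\le 1}p_{k,\ell}x^{-k}y^{-\ell}=t$, i.e.\ $L(x,y)=0$ after multiplying by $xy$. Thus setting $x=e^{-a_1}$, $y=e^{-a_2}$ maps $\partial D_t$ into the real locus $\{(x,y)\in(0,\infty)^2:L(x,y)=0\}$, and since $a$ is real with $x,y>0$ this lands on the real algebraic curve parametrized by $Y_0(x)$.

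First I would make this change of variables precise and identify, among the real points of $\{L=0\}$, the arc corresponding to $\Gamma_t^+$. By definition $\Gamma_t^+$ consists of those $a\in\partial D_t$ whose outer normal direction $q(a)=\nabla\phi(a)/|\nabla\phi(a)|$ lies in ${\bf S}^1_+$, i.e.\ has both coordinates nonnegative. Computing $\partial_{a_1}\phi=\sum k p_{k,\ell}e^{ka_1}e^{\ell a_2}$ and translating to the $(x,y)$ variables, the nonnegativity of the gradient components should correspond precisely to the portion of the curve $y=Y_0(x)$ lying between the branch points, which is the arc where $x$ ranges over $[x_2,X(y_2)]$ and $y=Y_0(x)$ ranges over $[y_2,Y(x_2)]$. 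Here I would use the properties of the branch points recalled after~\eqref{def_d}: on $(x_2,x_3)$ the discriminant $\delta(x)$ is nonnegative so $Y_0(x)$ is real, the two branches $Y_0,Y_1$ meet at $x_2$ where $Y(x_2)=Y_0(x_2)=Y_1(x_2)$, and the endpoint $X(y_2)$ is where the companion curve $x=X_0(y)$ meets the branch point $y_2$; these two endpoints delimit exactly the ``positive-normal'' arc.

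The main obstacle is the correct matching of the endpoints and the verification that the normal-cone condition $q(a)\in{\bf S}^1_+$ cuts out precisely the sub-arc $p\in[x_2,X(y_2)]$ rather than a larger or smaller piece. This requires relating the sign of $\nabla\phi$ to the position of $(x,Y_0(x))$ between the branch points, which is a monotonicity/convexity argument using the strict convexity of $\phi$ established before Lemma~\ref{lem:exp_growth_t=t}. Once the bijection $a\mapsto p=e^{-a_1}$ (with the simultaneous relation $p'=e^{-a_2}=Y_0(p)$ forced by $L(p,p')=0$ and the branch normalization $|Y_0|\le|Y_1|$) is established as a homeomorphism from $\Gamma_t^+$ onto $[x_2,X(y_2)]$, the conclusion is immediate: Lemma~\ref{lem:exp_growth_t=t} gives $a\in\Gamma_t^+$ with the growth of $\{f(i,1)\}$ equal to $a_1=-\log p$, hence the growth rate in the multiplicative sense is $e^{a_1}=1/p$, and likewise the growth of $\{f(1,j)\}$ is $1/p'$ with $p'=Y_0(p)$. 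The reciprocal statement transports back along the same bijection.
\end{proof}
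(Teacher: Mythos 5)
Your proposal is correct and takes essentially the same route as the paper: the substitution $x=e^{-a_1}$, $y=e^{-a_2}$ identifying $\partial D_t$ with the positive real points of $\{L=0\}$, followed by the observation that the normal-cone condition defining $\Gamma^+_t$ (normals $(-1,0)$ at $x_2$ and $(0,-1)$ at $X(y_2)$) cuts out exactly the arc $\{(x,Y_0(x)) : x\in[x_2,X(y_2)]\}$, through which Lemma~\ref{lem:exp_growth_t=t} is transported. The paper's own proof is in fact terser (it cites \cite{KM98,FIM} for the structure of the real points of the kernel curve and simply records the two endpoint normals), while you also explicitly reconcile the additive growth $a_1$ of Lemma~\ref{lem:exp_growth_t=t} with the multiplicative growth $1/p=e^{a_1}$ stated here.
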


\begin{proof}
We first notice that $\phi(a)=t$ if and only if $L(1/e^{a_1},1/e^{a_2})=0$, see~\eqref{eq:def_phi} and~\eqref{eq:def_L}. Moreover, the real and positive points of $\{(x,y)\in{\bf C}^2 : L(x,y)=0\}$ are (see \cite{KM98,FIM})
\begin{equation*}
     \mathcal P= \{(x,Y_0(x)) : x\in[x_2,x_3]\}\cup \{(x,Y_1(x)) : x\in[x_2,x_3]\}.
\end{equation*}
The fact that $a\in \Gamma^+_t$ implies that $x\in[x_2,X(y_2)]$, since the normal to the curve $\mathcal P$ at $x_2$ (resp.\ $X(y_2)$) is $(-1,0)$ (resp.\ $(0,-1)$).\qed
\end{proof}

As a consequence of Lemmas~\ref{lem:exp_growth_t=t} and~\ref{lem:exp_growth_t=t_bis}, we obtain a proof of Theorem~\ref{thm:main_1}~\ref{thm:main_1_1}. We shall give more details on the proof of Theorem~\ref{thm:main_1} in Section~\ref{sec:resolution}.

\subsection{A boundary value problem}

In this section we prove that the function $L(x,0)H(x,0)$ satisfies a simple boundary value problem. A boundary value problem is composed of a boundary condition (Lemma~\ref{lem:boundary_condition}) and a regularity condition (Lemma~\ref{lem:regularity_gf}).

With the previous notation we introduce 
\begin{equation*}
     \mathcal M=X([y_1,y_2])=X_0([y_1,y_2])\cup X_1([y_1,y_2]).
\end{equation*}
This curve is symmetrical w.r.t.\ the real axis, since $\widetilde \delta$ is non-positive on $[y_1,y_2]$, and hence the two branches $X_0$ and $X_1$ are complex conjugate on that interval. See Figure~\ref{fig:ex_curve} for an example of curve $\mathcal M$. 

Denote by $\overline{x}$ the complex conjugate of $x\in{\bf C}$.
\begin{lemma}
\label{lem:boundary_condition}
We have the boundary condition: for all $x$ in $\mathcal M$,
\begin{equation*}
     L(x,0)H(x,0)-L(\overline{x},0)H(\overline{x},0) = 0.
\end{equation*}
\end{lemma}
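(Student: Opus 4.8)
The idea is to exploit the functional equation~\eqref{eq:functional_equation} on the curve $\mathcal M$, where by construction the kernel $L$ vanishes. The plan is as follows. Take $y$ in the interval $[y_1,y_2]$. On this interval $\widetilde\delta(y)\leq 0$, so the two branches $X_0(y)$ and $X_1(y)$ are complex conjugate (as noted just before the statement), and both lie on $\mathcal M$. Denote $x=X_0(y)$, so that $\overline{x}=X_1(y)$, and by definition of the algebraic function $X$ we have $L(x,y)=L(\overline{x},y)=0$.

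Next I would write the functional equation~\eqref{eq:functional_equation} at the two points $(x,y)=(X_0(y),y)$ and $(\overline{x},y)=(X_1(y),y)$. Since the left-hand side $L(x,y)H(x,y)$ vanishes at both points (the kernel is zero there), \eqref{eq:functional_equation} reduces in each case to
\begin{equation*}
     L(x,0)H(x,0) + L(0,y)H(0,y) - L(0,0)H(0,0) = 0,
\end{equation*}
and the analogous identity with $x$ replaced by $\overline{x}$. The two right-hand expressions share the same $y$-dependent terms $L(0,y)H(0,y)-L(0,0)H(0,0)$, because $y$ is common to both points. Subtracting the two identities therefore cancels the $y$-terms entirely and leaves
\begin{equation*}
     L(x,0)H(x,0) - L(\overline{x},0)H(\overline{x},0) = 0,
\end{equation*}
which is exactly the asserted boundary condition. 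As $y$ ranges over $[y_1,y_2]$, the point $x=X_0(y)$ sweeps out $X_0([y_1,y_2])$ and $\overline{x}=X_1(y)$ sweeps out $X_1([y_1,y_2])$, so together they cover all of $\mathcal M$, giving the condition for every $x\in\mathcal M$.

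The main subtlety — and the only point requiring care rather than the purely algebraic cancellation above — is that the functional equation~\eqref{eq:functional_equation} is derived as a formal power series identity, whereas I am evaluating it at the specific complex points $X_0(y),X_1(y)$ lying on the branch cuts. To make the substitution rigorous one must know that $H(x,0)$ and $H(0,y)$ extend analytically (as functions of a complex variable) to neighborhoods of $\mathcal M$ and of $[y_1,y_2]$ respectively, and that the series~\eqref{eq:generating_functions_harmonic_functions} for $H(x,y)$ converges at these points so that~\eqref{eq:functional_equation} holds as an identity of genuine analytic functions there. This is precisely the regularity information supplied by the growth estimates of Lemmas~\ref{lem:exp_growth_t=t} and~\ref{lem:exp_growth_t=t_bis}: controlling the exponential growth of $\{f(i,1)\}$ and $\{f(1,j)\}$ pins down the radii of convergence of $H(x,0)$ and $H(0,y)$, ensuring these generating functions are analytic up to and slightly beyond the relevant branch points $x_2$ and $y_2$, hence on $\mathcal M$. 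Granting this analytic continuation (to be recorded separately, presumably in the companion regularity Lemma~\ref{lem:regularity_gf}), the subtraction argument above is valid and the proof is complete.
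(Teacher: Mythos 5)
Your proposal is correct and takes essentially the same approach as the paper: the paper's proof also evaluates the functional equation~\eqref{eq:functional_equation} on the zero set of the kernel and subtracts the two resulting identities sharing the same $y$-terms, merely phrasing the two conjugate points as the limits of $(X_0(y),y)$ when $y$ approaches $[y_1,y_2]$ from above and from below, which are precisely your $X_0(y)$ and $X_1(y)$. Your closing caveat about convergence and analytic continuation corresponds to the role played in the paper by Lemmas~\ref{lem:exp_growth_t=t_bis} and~\ref{lem:regularity_gf}, so the argument is sound.
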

We have a similar equation for $L(0,y)H(0,y)$ on the curve $\mathcal L=Y([x_1,x_2])$. 
\begin{proof}
Lemma~\ref{lem:boundary_condition} is classical; see \cite[Section 2.6]{Ra14} for the original proof in the zero drift case. The main idea is to evaluate~\eqref{eq:functional_equation} at $(X_0(y),y)$, and then to make the difference of the two equations obtained by letting $y$ go to $[y_1,y_2]$ from above and below in ${\bf C}$ (i.e., with $y$ having a positive and then a negative imaginary part).\qed
\end{proof}

\unitlength=0.6cm
\begin{figure}[t]
\begin{center}
    \begin{picture}(0,5)
    \thicklines
    \put(-5,0){\vector(1,0){10}}
    \put(0,-5){\vector(0,1){10}}
     \put(3.5,0.4){$X(y_2)$}
     \put(-3.3,0.4){$X(y_1)$}
    {\put(0.5,0){\textcolor{black}{\circle*{0.3}}}}
    \put(1.3,0.4){$x_2$}
    \put(2.2,-0.65){\textcolor{red}{$S_t$}}
    \put(-1.0,2.8){\textcolor{blue}{$\mathcal M$}}
   \textcolor{blue}{\qbezier(3.5,0)(2,3.5)(0,3.5)
    \qbezier(3.5,0)(2,-3.5)(0,-3.5)
    \qbezier(0,3.5)(-3.5,3.5)(-3.5,0)
    \qbezier(-3.5,0)(-3.5,-3.5)(0,-3.5)}
            \linethickness{1mm}
    \textcolor{red}{\put(1.1,0){\line(1,0){2.15}}}
    \thicklines
     {\put(1,-0.13){\textcolor{black}{$\bullet$}}}
    {\put(3.10,-0.13){\textcolor{black}{$\bullet$}}}
    {\put(-3.87,-0.13){\textcolor{black}{$\bullet$}}}
    \thicklines
     \end{picture}
\end{center}
\vspace{28mm}
\caption{The curve $\mathcal M=X([y_1,y_2])$ is symmetrical w.r.t.\ the real axis. It is smooth everywhere except at $X(y_2)$, where it may have a corner point (if and only if $t=t_0$). Any $1/p$, with $p$ point of the red segment $S_t$, is the exponential growth of a harmonic function}
\label{fig:ex_curve}
\end{figure}
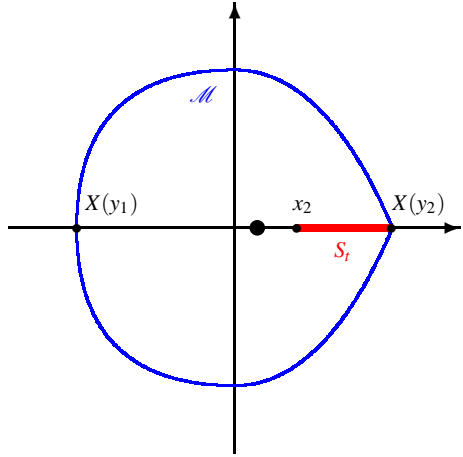

Lemma~\ref{lem:exp_growth_t=t_bis} implies that: to any minimal $t$-harmonic function $\{f(i,j)\}$ we can associate a number $p\in [x_2,X(y_2)]$ such that $1/p$ (resp.\ $1/p'$, with $p'=Y_0(p)$) is the exponential growth of $\{f(i,1)\}$ (resp.\ $\{f(1,j)\}$). We write $\{f_p(i,j)\}$ and $H_p(x,y)$ to emphasize this exponential growth. 

Let $\mathcal D_\mathcal M$ be the interior domain delimited by the curve $\mathcal M$ (containing $x_2$ on Figure~\ref{fig:ex_curve}) and $\overline{\mathcal D}_\mathcal M=\mathcal D_\mathcal M \cup \mathcal M$ be its closure. The lemma hereafter gives the regularity of the complex function $H_p(x,0)$ (a similar result holds for $H_p(0,y)$).
\begin{lemma}
\label{lem:regularity_gf}
For any $t\in[t_0,\infty)$, the generating function $H_p(x,0)$ is meromorphic in $\mathcal D_\mathcal M$, and has in $\overline{\mathcal D}_\mathcal M$ a unique singularity, at $p$. The singularity is on the boundary $\mathcal M$ if and only if $t\neq t_0$ and $p=X(y_2)$ or if $t=t_0$. In case~\ref{thm:main_1_1} {\rm(}$t\neq t_0${\rm)}, the singularity is polar. In case~\ref{thm:main_1_2} {\rm(}$t=t_0${\rm)}, it can be polar or not polar.
\end{lemma}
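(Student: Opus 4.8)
The plan is to analyze the singularities of $H_p(x,0)$ directly from the generating function $H(x,y)=\sum f(i,j)x^{i-1}y^{j-1}$ together with the exponential growth provided by Lemma~\ref{lem:exp_growth_t=t_bis}. Recall that the radius of convergence of a power series with non-negative coefficients is controlled by a real positive singularity on the circle of convergence (Pringsheim's theorem). Since $\{f(i,1)\}$ has exponential growth $1/p$, the series $H_p(x,0)=\sum_{i\geq 1}f(i,1)x^{i-1}$ has radius of convergence exactly $p$, and by Pringsheim $x=p$ is a genuine singularity of $H_p(x,0)$. This gives the location of \emph{one} singularity; the content of the lemma is that there is \emph{no other} singularity inside $\overline{\mathcal D}_\mathcal M$, that this singularity is polar except possibly at $t=t_0$, and the precise location of $p$ relative to the boundary $\mathcal M$.

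First I would establish that $H_p(x,0)$ extends meromorphically to $\mathcal D_\mathcal M$ with no singularity other than at $p$. The natural route is the boundary value problem: by Lemma~\ref{lem:boundary_condition}, $L(x,0)H_p(x,0)$ satisfies $L(x,0)H_p(x,0)=L(\overline{x},0)H_p(\overline{x},0)$ on $\mathcal M$, and this Riemann--Hilbert type relation lets one continue $H_p(x,0)$ analytically across $\mathcal M$, or equivalently express it via the functional equation~\eqref{eq:functional_equation} evaluated along $x\mapsto(x,Y_0(x))$. Inside $\mathcal D_\mathcal M$ the only possible obstruction to analyticity comes from the real interval $[x_2,X(y_2)]$, along which the growth data from Lemma~\ref{lem:exp_growth_t=t_bis} forces the unique singularity at $p$. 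One must check that the prefactor $L(x,0)$ does not introduce spurious poles or cancellations; since $L(x,0)=\gamma(x)$ (up to the factor $x\cdot 0$ convention, here $L(x,0)=x(p_{-1,1}x^{-1}+p_{0,1}+p_{1,1}x)$, a polynomial in $x$), its zeros are explicit and can be tracked, and the claim is that in $\overline{\mathcal D}_\mathcal M$ they either lie outside or are cancelled, leaving only the singularity at $p$.

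Next I would determine \textbf{when} the singularity lies on $\mathcal M$. Geometrically $p$ ranges over $[x_2,X(y_2)]$, the red segment $S_t$ of Figure~\ref{fig:ex_curve}, whose endpoints are $x_2\in\mathcal D_\mathcal M$ and $X(y_2)\in\mathcal M$. Hence for $t\neq t_0$ the singularity is interior unless $p=X(y_2)$, in which case it sits at the (possibly corner) point where $S_t$ meets $\mathcal M$. When $t=t_0$, the two branch points $y_2$ and $y_3$ (and correspondingly $x_2$ and $x_3$) coalesce, the curve $\mathcal M$ degenerates so that $x_2$ itself lies on $\mathcal M$, and the whole segment $S_t$ collapses onto the boundary; this is exactly the corner-point phenomenon noted in the caption of Figure~\ref{fig:ex_curve}. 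I would verify this coalescence by examining the discriminant $\widetilde\delta$ and the definition~\eqref{eq:def_t0} of $t_0$: at $t_0$ the minimum of $\phi$ is attained, which is precisely the condition for a double root, placing the singularity on $\mathcal M$.

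Finally, for the nature of the singularity, the claim is that for $t\neq t_0$ it is polar, while for $t=t_0$ it may be polar or not. For $t\neq t_0$, away from the coalescence the branch $Y_0$ and the conformal structure near $p$ are locally biholomorphic (the curve $\mathcal M$ is smooth, the gluing is nondegenerate), so the singularity of $H_p$ inherited through the functional equation is a simple pole; one reads this off from the explicit form in Theorem~\ref{thm:main_2}, where $H_p(x,0)=\frac{1}{L(x,0)}\bigl(\frac{\alpha}{w(x)-w(p)}+\beta\bigr)$ has a simple pole exactly where $w(x)=w(p)$. At $t=t_0$, the square-root branching of $X,Y$ at the double point $y_2=y_3$ means the conformal gluing function $w$ acquires a corner (the opening angle at the corner point changes), so the local behaviour of $w(x)-w(p)$ near $p$ can be either a simple vanishing (polar) or a fractional-power vanishing (non-polar algebraic singularity), depending on the precise geometry; the dichotomy matches the two sub-cases of Theorem~\ref{thm:main_2} case~\ref{thm:main_1_2}. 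The main obstacle I anticipate is the analysis at $t=t_0$: controlling the degenerate conformal map near the coalescing branch points and distinguishing the polar from the non-polar regime requires a careful local expansion of $w$ at the corner, rather than the soft continuation arguments that suffice for $t\neq t_0$.
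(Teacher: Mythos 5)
Your skeleton for the continuation step coincides with the paper's: the growth estimate of Lemma~\ref{lem:exp_growth_t=t_bis} gives analyticity of $H_p(x,0)$ in the disc $\mathcal D(0,p)$, and the functional equation~\eqref{eq:functional_equation} evaluated at $(x,Y_0(x))$, i.e.,
\begin{equation*}
L(x,0)H_p(x,0)+L(0,Y_0(x))H_p(0,Y_0(x))-L(0,0)H_p(0,0)=0,
\end{equation*}
is then used to define $H_p(x,0)$ on $\mathcal D_\mathcal M\setminus\mathcal D(0,p)$ through $H_p(0,Y_0(x))$. But you skip the one technical point that makes this identity usable: a priori it holds on no open set at all, since one must know that $Y_0(x)$ lands inside the disc of convergence $\mathcal D(0,p')$ of $H_p(0,\cdot)$. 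The paper secures exactly this with Lemma~\ref{lem:curve_included} ($\vert Y_0(u)\vert\leq Y_0(\vert u\vert)$ for $\vert u\vert=x$, $x\in[x_2,X(y_2)]$), applied with $x$ on the circle of radius $p$. Your alternative suggestion --- continuing ``across $\mathcal M$'' by reflection from Lemma~\ref{lem:boundary_condition} --- does not address the real issue, which is to fill the region between the circle of radius $p$ and the curve $\mathcal M$, not to cross $\mathcal M$.

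The more serious flaw is circularity in the polarity claim. You propose to read off that the singularity at $p$ is a pole from the closed form $H_p(x,0)=\frac{1}{L(x,0)}\bigl(\frac{\alpha}{w(x)-w(p)}+\beta\bigr)$ of Theorem~\ref{thm:main_2}. But the paper's proof of Theorem~\ref{thm:main_2_complete} begins with ``Lemma~\ref{lem:regularity_gf} implies that $L(x,0)H_p(x,0)$ has a unique pole at $p$'': the explicit formula is a consequence of the present lemma and cannot be invoked to prove it. The paper's non-circular argument is direct: once the continuation shows that the singularity at $p$ is isolated, one checks that $\lim_{x\to p}\vert H_p(x,0)\vert=\infty$ independently of the way $x\to p$, which rules out an essential singularity, hence the singularity is polar. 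Your proposal contains no substitute for this step (the remark that ``$Y_0$ and the conformal structure near $p$ are locally biholomorphic'' is not an argument, and your assertion that the pole is simple is in any case false when $p=X(y_2)$, where it has order $2$). The same circularity affects your treatment of $t=t_0$, which rests on a local expansion of $w$ at the corner and hence again presupposes the representation through $w$; the paper instead invokes the separate proof of \cite[Lemma 3]{Ra14} for this degenerate case. Finally, a small slip: $L(x,0)=\gamma(x)=p_{-1,1}x^2+p_{0,1}x+p_{1,1}$, not $p_{-1,1}+p_{0,1}x+p_{1,1}x^2$ as you wrote.
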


\begin{proof}
The case $t=t_0$ is rather special. In the case of a zero drift ($t_0=1$), it has been proved in \cite[Lemma 3]{Ra14}. For other values of $t=t_0$, the proof would be completely similar.

We therefore assume that $t\neq t_0$. It follows from Lemma~\ref{lem:exp_growth_t=t_bis} that the function $H_p(x,0)$ is analytic in the open disc $\mathcal D(0,p)$ centered at $0$ and of radius $p$. The same holds for $H_p(0,y)$ in $\mathcal D(0,p')$. Consider the identity
\begin{equation}
\label{eq:func_eq_evaluated}
     L(x,0)H_p(x,0) + L(0,Y_0(x))H_p(0,Y_0(x))-L(0,0)H_p(0,0)=0,
\end{equation}
which is the functional equation~\eqref{eq:functional_equation} evaluated at $(x,Y_0(x))$. The fact that~\eqref{eq:func_eq_evaluated} holds on a non-empty set is not clear a priori, and follows from Lemma~\ref{lem:curve_included}, with $x$ on the circle of radius $p$. A consequence of~\eqref{eq:func_eq_evaluated} is that $H_p(x,0)$ can be continued on the whole of $\mathcal D_\mathcal M$. Indeed, writing
\begin{equation*}
     \mathcal D_\mathcal M = \mathcal D(0,p) \cup (\mathcal D_\mathcal M\setminus \mathcal D(0,p)),
\end{equation*}
the generating function is defined through its power series in the first domain, and thanks to $H_p(0,Y_0(x))$ in the complementary domain. Further, we have that $\lim_{x\to p}\vert H_p(x,0)\vert =\infty$ (independently of the way that $x\to p$), so that $p$ is indeed a pole, and not an essential singularity.\qed
\end{proof}

The following result has been used in the proof of Lemma~\ref{lem:regularity_gf}. For the proof we refer to \cite[Lemma 28]{KR11}, which is a very close statement.

\begin{lemma}
\label{lem:curve_included}
Let $x\in [x_2,X(y_2)]$. Then for all $\vert u\vert=x$, we have $\vert Y_0(u)\vert\leq Y_0(\vert u\vert )$. Furthermore, the inequality is strict, except for $u=x$.
\end{lemma}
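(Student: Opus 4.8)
The plan is to exploit the probability‑generating structure of the kernel together with a convexity argument. Writing the equation $L(u,y)=0$ in the equivalent form $\sum_{-1\le k,\ell\le 1} p_{k,\ell}\,u^{-k}y^{-\ell}=t$ (valid for $u,y\neq 0$), both roots $Y_0(u),Y_1(u)$ of the kernel $L(u,\cdot)$ satisfy this relation. Since $|u|=x>0$ and all $p_{k,\ell}\ge 0$, the triangle inequality gives, for any such root $y$,
\[
     t=\Bigl|\textstyle\sum_{k,\ell} p_{k,\ell}\,u^{-k}y^{-\ell}\Bigr|\le \sum_{k,\ell} p_{k,\ell}\,x^{-k}|y|^{-\ell}=:g(|y|),
\]
where $g(\rho)=\sum_{k,\ell}p_{k,\ell}\,x^{-k}\rho^{-\ell}$ is defined for $\rho>0$. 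Thus both $|Y_0(u)|$ and $|Y_1(u)|$ lie in $\{\rho>0:g(\rho)\ge t\}$.

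Next I would analyse $g$. As a function of $\log\rho$ it is a non‑negative combination of the convex exponentials $\rho^{-\ell}$, $\ell\in\{-1,0,1\}$, hence convex. For $x\in[x_2,X(y_2)]\subseteq[x_2,x_3]$ the points $(x,Y_0(x))$ and $(x,Y_1(x))$ are exactly the real positive points of the curve $L=0$ lying above $x$ (see the proof of Lemma~\ref{lem:exp_growth_t=t_bis}), so $g(Y_0(x))=g(Y_1(x))=t$ with $0<Y_0(x)\le Y_1(x)$. By convexity these are the only positive solutions of $g=t$, whence $g>t$ outside $[Y_0(x),Y_1(x)]$ and $g\le t$ inside. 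Combining with the previous step, neither $|Y_0(u)|$ nor $|Y_1(u)|$ can lie in the open interval $(Y_0(x),Y_1(x))$.

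To pin $|Y_0(u)|$ to the lower branch I use continuity. The map $u\mapsto|Y_0(u)|=\min(|Y_0(u)|,|Y_1(u)|)$ is continuous on the whole circle $\{|u|=x\}$ (the modulus is unaffected by the branch swap across the cut $[x_1,x_2]$, since the two branches are complex conjugate, hence of equal modulus, there). At $u=x$ it equals $Y_0(x)<Y_1(x)$ when $x\neq x_2$. Were $|Y_0(u)|\ge Y_1(x)$ at some point of the circle, continuity would force $|Y_0(u)|$ to cross the forbidden interval $(Y_0(x),Y_1(x))$, a contradiction; hence $|Y_0(u)|\le Y_0(x)$ throughout, and the endpoint case $x=x_2$ follows by letting $x\downarrow x_2$.

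Finally, strictness. Equality $|Y_0(u)|=Y_0(x)$ forces equality in the triangle inequality above, so all terms $p_{k,\ell}u^{-k}y^{-\ell}$ with $p_{k,\ell}>0$ share a common argument; writing $u=xe^{i\theta}$ and $y=Y_0(x)e^{i\psi}$, this means $k\theta+\ell\psi$ is constant modulo $2\pi$ over the support. I expect this to be the only genuinely delicate point: I would invoke the non‑degeneracy hypothesis~\ref{prop5} (no three consecutive zero jumps) to guarantee that the support contains directions whose differences force $\theta\equiv 0$ (and then $\psi\equiv 0$), so that $u=x$. The convexity and triangle‑inequality steps are routine; the care needed lies in the continuity and branch bookkeeping across the cut, and in deducing $\theta=0$ from~\ref{prop5}.
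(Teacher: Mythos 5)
You should first know that the paper does not actually prove this lemma: its ``proof'' is a one-line reference to \cite[Lemma 28]{KR11}, so your self-contained argument is necessarily a different route, and its first half is sound. Writing $L(u,y)=0$ as $\sum p_{k,\ell}u^{-k}y^{-\ell}=t$, the triangle inequality, the strict convexity of $g(\rho)=\sum p_{k,\ell}x^{-k}\rho^{-\ell}$ as a function of $\log\rho$ (hypothesis~\ref{prop5} guarantees $\alpha\not\equiv0$ and $\gamma\not\equiv 0$, hence $\alpha(x),\gamma(x)>0$ and $g\to\infty$ at $0$ and $\infty$), and the intermediate value theorem on the connected circle do give $\vert Y_0(u)\vert\le Y_0(x)$ for all $\vert u\vert=x$, provided the forbidden interval $(Y_0(x),Y_1(x))$ is non-empty. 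This holds for $x\in(x_2,x_3)$; note that you implicitly also need $X(y_2)<x_3$ (true for $t>t_0$, since the curve $L=0$ is then smooth), and that at $t=t_0$ the whole argument degenerates ($x_2=x_3=X(y_2)$, the interval is empty), so your proof only covers $t>t_0$ --- which happens to be the only case where the paper uses the lemma, in the proof of Lemma~\ref{lem:regularity_gf}.

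The strictness step, however, is a genuine gap, and it cannot be repaired under \ref{prop4}--\ref{prop5} alone. Your reduction is correct: equality $\vert Y_0(u)\vert=Y_0(x)$ forces equality in the triangle inequality, hence, writing $u=xe^{i\theta}$ and $Y_0(u)=Y_0(x)e^{i\psi}$, the condition $k\theta+\ell\psi\equiv 0\pmod{2\pi}$ for every $(k,\ell)$ in the support. But the solutions $(\theta,\psi)$ form the annihilator of the subgroup of ${\bf Z}^2$ generated by the support, and \ref{prop5} does not force this subgroup to be all of ${\bf Z}^2$. Concretely, take $p_{1,1}=p_{1,-1}=p_{-1,1}=p_{-1,-1}=1/4$: hypothesis~\ref{prop5} holds (the zeros in the clockwise list are isolated), yet the support lies in the sublattice $\{(k,\ell):k+\ell\ \text{even}\}$ and $(\theta,\psi)=(\pi,\pi)$ is a nontrivial solution. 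This is not merely a failure of your method: for this walk $L(-x,-y)=L(x,y)$, so the roots of $L(-x,\cdot)$ are $-Y_0(x)$ and $-Y_1(x)$, whence $\vert Y_0(-x)\vert=Y_0(x)$, i.e.\ equality at $u=-x\neq x$ --- the strictness asserted in the lemma is itself false for this step set. The missing ingredient is the assumption that the support generates ${\bf Z}^2$ as a group (spatial irreducibility), which is part of the standing hypotheses of \cite{KR11} where the lemma is proved, but is not implied by \ref{prop4}--\ref{prop5}. So you correctly identified the delicate point, but the deduction of $\theta=0$ from \ref{prop5} is impossible; with the added irreducibility assumption, your annihilator argument does close the proof.
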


\section{Resolution of the boundary value problem: proof of Theorems~\ref{thm:main_1} and~\ref{thm:main_2}}
\label{sec:resolution}

\subsection{Conformal gluing functions}

Lemmas~\ref{lem:boundary_condition} and~\ref{lem:regularity_gf} imply that the function $L(x,0)H(x,0)$ belongs to the set of functions $f$ which are meromorphic in $\mathcal D_\mathcal M$ and satisfy on $\mathcal M$ the equality $f(x)=f(\overline{x})$. This set of functions is too large to work on: for instance, $P\circ f$ still belongs to this set for any polynomial $P$. The good idea is to impose a minimality condition on $f$, and to introduce the notion of conformal gluing functions (our general reference for this is the book of Litvinchuk \cite{Li00}, and more specifically its second chapter).

\begin{definition}
\label{def:conformal_gluing}
A conformal gluing function $w$ for $\mathcal D_\mathcal M$ is a function meromorphic and injective on $\mathcal D_\mathcal M$, continuous on $\overline{\mathcal D}_\mathcal M$ except at a finite number of points, and such that $w(x)=w(\overline x)$ for $x\in\mathcal M$.
\end{definition}

As stated in the lemma below, conformal gluing functions exist. They must have a unique singularity on $\overline{\mathcal D}_\mathcal M$ (of order $1$ if the singularity is in the interior $\mathcal D_\mathcal M$), and are essentially characterized by the location of this singularity.

\begin{lemma}[\cite{Li00}]
\label{lem:conformal_mapping_exists}
Let $p\in\overline{\mathcal D}_\mathcal M$. Up to additive and multiplicative constants, there exists a unique conformal gluing function $w$ for $\mathcal D_\mathcal M$ with a pole at $p$. Further, for any two conformal gluing functions $w_1$ and $w_2$, there exist $a,b,c,d\in{\bf C}$ with $ad-bc\neq 0$ such that $w_2=\frac{aw_1+b}{cw_1+d}$.
\end{lemma}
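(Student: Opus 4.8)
The plan is to reduce the statement to the uniformization theorem, by realizing a conformal gluing function as a uniformizing map of the closed surface obtained from $\overline{\mathcal D}_\mathcal M$ by identifying each boundary point $x\in\mathcal M$ with its conjugate $\overline x$. Since $\mathcal M$ is a Jordan curve symmetric about the real axis and meeting it at the two points $X(y_1)<X(y_2)$ (the fixed points of $x\mapsto\overline x$ on $\mathcal M$), gluing the upper boundary arc to the lower one by conjugation folds the closed disc $\overline{\mathcal D}_\mathcal M$ into a topological sphere. First I would equip this quotient with a complex structure: at an interior point of $\mathcal D_\mathcal M$ nothing changes; at a glued pair $\{x,\overline x\}$ with $x$ in the open upper arc, a half-disc neighbourhood from above is attached to its conjugate half-disc from below along the real-analytic arc $\mathcal M$, producing a genuine chart. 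This yields a Riemann surface $\mathcal S$ homeomorphic to the sphere, so by uniformization $\mathcal S$ is conformally ${\bf C}\cup\{\infty\}$. Composing the quotient projection with a uniformizing isomorphism gives a function $w_0$ that is meromorphic and injective on $\mathcal D_\mathcal M$, continuous on $\overline{\mathcal D}_\mathcal M$ outside finitely many points, and satisfies $w_0(x)=w_0(\overline x)$ on $\mathcal M$ --- that is, a conformal gluing function. Its image $w_0(\mathcal D_\mathcal M)$ is ${\bf C}\cup\{\infty\}$ minus the analytic slit $w_0(\mathcal M)$, traced twice as $x$ runs over the upper and lower arcs.

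The delicate point in the existence step is the complex structure at the two fixed points $X(y_1),X(y_2)$: there the gluing has no conjugate partner to pair with, and the interior angle of $\mathcal M$ must be straightened to obtain a chart (this is precisely where the corner at $X(y_2)$, appearing when $t=t_0$ as noted after Lemma~\ref{lem:boundary_condition}, demands care). I would handle these points with a local model: near $X(y_j)$ the reflection principle lets one extend across the real segment, and after a power map accounting for the opening angle one obtains a chart; away from $t=t_0$ the curve is smooth, the angle is $\pi$, and a square-root type chart suffices. I expect this local analysis at the fixed points to be the main obstacle. For a prescribed pole location $p\in\overline{\mathcal D}_\mathcal M$, I would then post-compose $w_0$ with a Möbius transformation $m$ sending $w_0(p)$ to $\infty$: since Möbius maps preserve the relation $w(x)=w(\overline x)$, the function $w=m\circ w_0$ is again a conformal gluing function, now with its unique singularity at $p$. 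When $p\in\mathcal D_\mathcal M$, injectivity of $w$ forces this pole to be simple, because $1/w$ is then holomorphic and injective near $p$ with a simple zero.

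For uniqueness up to Möbius transformations, let $w_1,w_2$ be two conformal gluing functions and set $F=w_2\circ w_1^{-1}$, a conformal bijection between the slit domains $({\bf C}\cup\{\infty\})\setminus w_1(\mathcal M)$ and $({\bf C}\cup\{\infty\})\setminus w_2(\mathcal M)$. The two sides of the slit $w_1(\mathcal M)$ are the images under $w_1$ of a boundary point $x$ and of $\overline x$; since $w_2(x)=w_2(\overline x)$ as well, $F$ takes the same value from both sides of the slit, hence extends continuously, and then holomorphically by removability of analytic arcs, across $w_1(\mathcal M)$. The extension is a conformal automorphism of ${\bf C}\cup\{\infty\}$, i.e.\ a Möbius transformation $\frac{a\,w_1+b}{c\,w_1+d}$ with $ad-bc\neq0$, giving the last assertion. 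Finally, if both $w_1,w_2$ have their pole at the same $p$, then $F$ fixes $\infty$, so $c=0$ and $F$ is affine; thus $w_2=a\,w_1+b$, which is exactly uniqueness up to additive and multiplicative constants.
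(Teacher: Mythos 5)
Your proposal addresses a statement the paper never proves: Lemma~\ref{lem:conformal_mapping_exists} is imported wholesale from Litvinchuk's book \cite{Li00}, and for existence the paper ultimately leans on the explicit elliptic-function formula for $w$ recalled in Appendix~\ref{sec:expression_w} (taken from \cite{FIM}). So the comparison below is with the classical gluing-theory argument, not with anything in the text.

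Your existence argument (weld $\overline{\mathcal D}_\mathcal M$ along $\mathcal M$ by conjugation, endow the resulting topological sphere with a complex structure, uniformize) is a correct strategy and is essentially the structural content of the classical proof. Note, however, that the standard route is lighter and bypasses exactly the step you flag as the main obstacle (charts at the fixed points $X(y_1),X(y_2)$, in particular at the corner when $t=t_0$): since $\mathcal D_\mathcal M$ is a Jordan domain symmetric w.r.t.\ the real axis, take the Riemann map $\varphi:\mathcal D_\mathcal M\to\{|z|<1\}$ normalized at a real interior point so that $\varphi(\overline z)=\overline{\varphi(z)}$ (uniqueness of the normalized Riemann map), extend it to a homeomorphism of the closures by Carath\'eodory's theorem (no smoothness of $\mathcal M$ is needed, corners included), and set $w_0=\varphi+1/\varphi$. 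On $\mathcal M$ one has $\vert\varphi\vert=1$, hence $w_0(x)=2\,{\rm Re}\,\varphi(x)=w_0(\overline x)$: the Joukowski map performs the welding for free, with no local analysis along the glued arc or at the fixed points. Prescribing the pole at any $p\in\overline{\mathcal D}_\mathcal M$ by post-composing with a M\"obius map then works exactly as you describe.

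The genuine gap is in your uniqueness paragraph. You set $F=w_2\circ w_1^{-1}$ and call it a conformal bijection between $({\bf C}\cup\{\infty\})\setminus w_1(\mathcal M)$ and $({\bf C}\cup\{\infty\})\setminus w_2(\mathcal M)$. This presupposes two facts that Definition~\ref{def:conformal_gluing} does not grant for an \emph{arbitrary} gluing function $w_1$: (i) that $w_1(\mathcal D_\mathcal M)$ fills the whole complement of $w_1(\mathcal M)$ (the definition only yields an injective map of $\mathcal D_\mathcal M$ onto \emph{some} open set disjoint from $w_1(\mathcal M)$); and (ii) that the only identifications $w_1$ makes on $\mathcal M$ are $x\sim\overline x$, so that $w_1(\mathcal M)$ is an arc with two well-defined sides indexed by $x$ and $\overline x$ --- without this, ``the two sides of the slit'' is not meaningful. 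Moreover, the definition allows finitely many boundary discontinuities of $w_1$, which your slit argument never confronts. All of this is repairable, but the clean repair reverses your logic: first establish (i)--(ii) for the explicitly constructed $w_0$, where they hold by construction because $w_0$ descends to a biholomorphism of the quotient sphere onto ${\bf C}\cup\{\infty\}$; then run your removability-across-the-slit argument on $G=w_1\circ w_0^{-1}$ for an arbitrary gluing function $w_1$. The finitely many bad boundary points and the two slit endpoints are then isolated singularities of a holomorphic map that is injective on a dense open set, hence poles or removable singularities (Picard), so $G$ is a M\"obius transformation and $w_1=G\circ w_0$. Applying this to both $w_1$ and $w_2$ yields the two assertions of the lemma. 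A last nitpick: the chart at a smooth fixed point is the squaring map $z\mapsto z^2$ (the welding doubles the interior angle $\pi$ into $2\pi$), not a ``square-root type'' chart.
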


\subsection{Complete statement of Theorem~\ref{thm:main_2}}

Let $w$ be a conformal mapping as in Lemma~\ref{lem:conformal_mapping_exists} with a pole at $x_0\in (X(y_1),x_2)\setminus\{0\}$ (this reference point $x_0$ is arbitrary), see Figure~\ref{fig:ex_curve} for its location. Subtracting by $w(0)$, we may assume that $w(0)=0$.

The singularity of $L(x,0)H_p(x,0)$ is not located anywhere in $\overline{\mathcal D}_\mathcal M$, but on the segment $[x_2,X(y_2)]$, see Lemma~\ref{lem:regularity_gf}. Let us call $\frac{\alpha}{w-w(p)}+\beta$ the class of conformal gluing functions with a pole at $p\in[x_2,X(y_2)]$, see Lemma~\ref{lem:conformal_mapping_exists}. Our Theorem~\ref{thm:main_2} will be restated as:
\begin{equation}
\label{eq:our_result_shows}
     L(x,0)H_p(x,0)=\frac{\alpha}{w(x)-w(p)}+\beta.
\end{equation}
In other words, the conformal gluing functions parametrized by $p\in[x_2,X(y_2)]$ offer a complete solution to our problem. Notice that expressions for the constants $\alpha$ and $\beta$ will follow from a one or two term(s) expansion of the equality~\eqref{eq:our_result_shows}.

We now state the theorem in full details. Define
\begin{equation}
\label{eq:expression_alpha}
     \alpha=-f(1,1)\times \left\{\begin{array}{ll}
        \displaystyle\frac{p_{0,1}w(p)^2}{w'(0)} &\text{if } p_{1,1}=0 \text{ and } p_{0,1}\neq 0,\\
        \displaystyle\frac{2p_{-1,1}w(p)^2}{w''(0)}&\text{if } p_{1,1}=0 \text{ and } p_{0,1}= 0,\smallskip\\
        \displaystyle\frac{(w(X_0(0))-w(p))w(p)}{w(X_0(0))}&\text{if } p_{1,1}\neq 0,
        \end{array}\right.
\end{equation}
and
\begin{equation}
\label{eq:expression_beta}
\beta=p_{1,1}f(1,1)+\frac{\alpha}{w(p)}.
\end{equation}

\setcounter{theorem}{1}
\begin{theorem}[complete version]
\label{thm:main_2_complete}
Let $\alpha$ and $\beta$ be defined in~\eqref{eq:expression_alpha} and~\eqref{eq:expression_beta}. We have
\begin{align*}
     H_p(x,0) &= \frac{1}{L(x,0)}\left(\frac{\alpha}{w(x)-w(p)}+\beta\right),\\ 
     H_p(0,y)&=\frac{p_{1,1}f(1,1)-L(X_0(y),0)H_p(X_0(y),0)}{L(0,y)}.
\end{align*}
\end{theorem}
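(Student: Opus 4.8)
The plan is to reduce both displayed formulas to the single identity
$$F(x) := L(x,0)H_p(x,0) = \frac{\alpha}{w(x)-w(p)}+\beta =: G(x)$$
on $\mathcal D_\mathcal M$, and then to read off $\alpha$ and $\beta$ from a local analysis at the origin. First I would record the two structural properties of $F$: by Lemma~\ref{lem:boundary_condition} it satisfies the gluing relation $F(x)=F(\overline{x})$ for $x\in\mathcal M$, and by Lemma~\ref{lem:regularity_gf} (in case~\ref{thm:main_1_1}) it is meromorphic on $\mathcal D_\mathcal M$ with a single, polar singularity at $p$. The candidate $G$ has exactly the same two properties: since $w$ is injective on $\mathcal D_\mathcal M$ with $w(x)=w(\overline{x})$ on $\mathcal M$ (Definition~\ref{def:conformal_gluing}), $G$ obeys $G(x)=G(\overline{x})$ on $\mathcal M$, and its only pole is at $p$ (a simple one, as $w(x)-w(p)$ has a simple zero there).

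Next I would fix $\alpha$ so that the pole of $F$ at $p$ is cancelled; then $\Phi := F-\frac{\alpha}{w(x)-w(p)}$ is holomorphic on $\mathcal D_\mathcal M$, continuous up to $\mathcal M$ away from the finitely many exceptional points of $w$, and still satisfies $\Phi(x)=\Phi(\overline{x})$ on $\mathcal M$. The heart of the argument is a Liouville-type statement: any such $\Phi$ is necessarily constant. I would prove it by transporting $\Phi$ through $w$. Because $w$ glues $x$ with $\overline{x}$, it maps $\mathcal M$ two-to-one onto a slit and $\mathcal D_\mathcal M$ conformally onto its complement; the function $\Phi\circ w^{-1}$ then takes equal boundary values on the two sides of the slit, so by a Morera/reflection argument it continues analytically across it to a bounded entire function on ${\bf C}$ (bounded near $\infty$ since $w$ has its pole at $x_0\neq p$, where $\Phi$ is finite), hence is constant. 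Calling this constant $\beta$ gives $F=G$, which is exactly the asserted expression for $H_p(x,0)$ after dividing by $L(x,0)$.

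It then remains to identify $\alpha$ and $\beta$. Evaluating $F=G$ at $x=0$, and using $w(0)=0$, $L(0,0)=p_{1,1}$ and $H_p(0,0)=f(1,1)$, yields $p_{1,1}f(1,1)=\beta-\alpha/w(p)$, which is~\eqref{eq:expression_beta}. The value of $\alpha$ is extracted from one further piece of local data, the relevant order being dictated by the vanishing of $L(x,0)=\gamma(x)=p_{-1,1}x^2+p_{0,1}x+p_{1,1}$ at $0$. When $p_{1,1}\neq 0$ I would use that $F$ vanishes at the root $X_0(0)$ of $L(\cdot,0)$ lying in $\mathcal D_\mathcal M$ (there $H_p$ is analytic by Lemma~\ref{lem:regularity_gf}, while $L(X_0(0),0)=0$), so $G(X_0(0))=0$; together with the relation for $\beta$ this determines $\alpha$ in terms of $w(p)$ and $w(X_0(0))$. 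When $p_{1,1}=0$ the point $X_0(0)$ degenerates to $0$, and I would instead match the first derivative of $F=G$ at $0$ if $p_{0,1}\neq 0$, and the second derivative if $p_{0,1}=0$; since $\gamma$ then vanishes to order one, respectively two, the corresponding Taylor coefficient of $\gamma\cdot H_p$ involves only $f(1,1)$, producing the expressions with $w'(0)$ and $w''(0)$ in~\eqref{eq:expression_alpha}. Finally, the formula for $H_p(0,y)$ is immediate: evaluating the functional equation~\eqref{eq:functional_equation} at $(X_0(y),y)$, where $L(X_0(y),y)=0$, and using $L(0,0)H_p(0,0)=p_{1,1}f(1,1)$, one solves directly for $H_p(0,y)$.

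The main obstacle is the Liouville step, namely justifying rigorously the analytic continuation of $\Phi\circ w^{-1}$ across the image of $\mathcal M$ and bounding it near the finitely many points where $w$ fails to be continuous, in particular near the possible corner $X(y_2)$, which is precisely the delicate regime as $t\to t_0$; this is where one must invoke the fine properties of the conformal gluing function furnished by Lemma~\ref{lem:conformal_mapping_exists}. By contrast, once $F=G$ is established, the determination of $\alpha$ and $\beta$ and the derivation of the formula for $H_p(0,y)$ are routine.
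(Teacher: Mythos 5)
Your strategy is essentially the paper's own: compensate the singularity of $F(x):=L(x,0)H_p(x,0)$ by $\alpha/(w(x)-w(p))$, invoke the rigidity statement that a function analytic in $\mathcal D_\mathcal M$, continuous on $\overline{\mathcal D}_\mathcal M$ and invariant under $x\mapsto\overline{x}$ on $\mathcal M$ must be constant, identify $\alpha,\beta$ from local data, and obtain $H_p(0,y)$ from the functional equation~\eqref{eq:functional_equation} evaluated at $(X_0(y),y)$. Re-proving the rigidity statement by transporting through $w$ and reflecting across the slit, rather than citing \cite[Lemma 4]{Ra14} (taken from \cite{Li00}), is legitimate and makes the argument more self-contained. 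The genuine gap is at the compensation step. Lemma~\ref{lem:regularity_gf} only says that the singularity of $H_p(x,0)$ at $p$ is \emph{polar}; it gives no bound on its order. Your sentence ``fix $\alpha$ so that the pole of $F$ at $p$ is cancelled'' presupposes that the principal part of $F$ at $p$ is proportional to that of $1/(w(x)-w(p))$: a single constant cannot cancel a pole of order $\geq 2$ against a simple pole. The paper closes exactly this hole with a positivity argument: if the pole of $F$ had order higher than allowed, the resulting solution would have negative Taylor coefficients near $0$ (see \cite[Lemma 11]{Ra14}), contradicting $f>0$. Without this ingredient your Liouville step never starts.

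Two further points. First, the endpoint $p=X(y_2)$ belongs to $S_t$ and is covered by the theorem, but not by your argument: there your claim that $1/(w(x)-w(p))$ has a \emph{simple} pole is false, since the gluing relation $w(x)=w(\overline{x})$ forces $w'(X(y_2))=0$, so that pole has order $2$ --- and correspondingly $F$ has a double pole there, which is precisely why the same compensation still works; you flag $X(y_2)$ only as a continuity issue for $w$, not as the place where both pole orders jump to $2$. Likewise the case $t=t_0$, which the paper treats by reference to \cite[Sections 3.1--3.3]{Ra14}, is not addressed at all in your proposal. Finally, a point in your favour: determining $\alpha$ when $p_{1,1}\neq 0$ by evaluating at the root $X_0(0)$ of $L(\cdot,0)$ is the right move (a pure expansion at $0$ would involve the unknown coefficient $f(2,1)$), and carrying it out with your relation $\beta=p_{1,1}f(1,1)+\alpha/w(p)$ gives $\alpha=-p_{1,1}f(1,1)(w(X_0(0))-w(p))w(p)/w(X_0(0))$; this differs from the printed~\eqref{eq:expression_alpha} by the factor $p_{1,1}$, and indeed the paper's constant as displayed is incompatible with~\eqref{eq:expression_beta} and $G(X_0(0))=0$ unless $p_{1,1}=1$, so your route actually corrects an apparent typo in the statement.
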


\subsection{Proof of Theorem~\ref{thm:main_2_complete}}

\begin{proof}
The proof of Theorem~\ref{thm:main_2_complete} is based on the following remark (see \cite[Lemma 4]{Ra14}, taken from \cite{Li00}): if a function $f$ is analytic on $\mathcal D_\mathcal M$, continuous on $\overline{\mathcal D}_\mathcal M$ and satisfies $f(x)=f(\overline{x})$ for $x\in\mathcal M$, then it must be a constant function.

Let us begin with the case $t>t_0$. Lemma~\ref{lem:regularity_gf} implies that $L(x,0)H_p(x,0)$ has a unique pole at $p$. Let us assume for a while that this pole is of order $1$ (which will always be the case, except if $p=X(y_2)$, in which case the pole has order $2$). Then by choosing suitably the value of $\alpha$, the function
\begin{equation}
\label{eq:after_compensation}
     L(x,0)H_p(x,0)-\frac{\alpha}{w(x)-w(p)}-\beta
\end{equation}
has no pole in $\mathcal D_\mathcal M$ (since $w$ is injective in $\mathcal D_\mathcal M$, see Definition~\ref{def:conformal_gluing}, the function $\frac{1}{w(x)-w(p)}$ has a pole of order $1$ at $p$, as soon as $p\in\mathcal D_\mathcal M$) and is continuous on $\overline{\mathcal D}_\mathcal M$. The function~\eqref{eq:after_compensation} also satisfies the condition $f(x)=f(\overline{x})$ on the boundary. Hence we can use the above remark to conclude that~\eqref{eq:after_compensation} is a constant function. The value of $\beta$ can be adapted so as to have that~\eqref{eq:after_compensation} is $0$. To compute the exact values of the constants $\alpha$ and $\beta$, a series expansion of~\eqref{eq:our_result_shows} around $0$ is enough.

In the case $t>t_0$ but $p=X(y_2)$, the pole of $\frac{1}{w(x)-w(p)}$ at $p$ is of order $2$ (indeed, around $p\in\{X(y_1),X(y_2)\}$, the equality $w(x)=w(\overline{x})$ yields $w'(p)=0$), and the same expression as~\eqref{eq:after_compensation} can be obtained.

The fact that $L(x,0)H_p(x,0)$ has a pole of order $1$ or $2$ at $p$ follows essentially from that we are looking for positive harmonic functions $\{f(i,j)\}$. If the pole were of higher order, then the solution would have negative coefficients in its expansion near $0$, see \cite[Lemma 11]{Ra14}, which is impossible.

See \cite[Sections 3.1--3.3]{Ra14} for the proof of Theorem~\ref{thm:main_2_complete} in the case $t=t_0=1$, which can be immediately adapted to the case $t=t_0\neq 1$.\qed
\end{proof}

\subsection{Proof of Theorem~\ref{thm:main_1}}

\begin{proof}
In the small steps case (assumptions~\ref{prop4}--\ref{prop5}),~\ref{thm:main_1_1} and~\ref{thm:main_1_2} of Theorem~\ref{thm:main_1} follow independently from Theorem~\ref{thm:main_2} or from Lemma~\ref{lem:exp_growth_t=t} (or its reformulation Lemma~\ref{lem:exp_growth_t=t_bis}). Theorem~\ref{thm:main_1}~\ref{thm:main_1_3} is a consequence of classical results, see \cite{Pr64}.\qed
\end{proof}

\section{Extension and second proof of Theorem~\ref{thm:main_1}}
\label{sec:general_case}

In this section we consider weights $\{p_{k,\ell}\}$ having exponential moments. This assumption implies that the function $\phi$ introduced in~\eqref{eq:def_phi} is well defined on ${\bf R}^2$.

We note $\{f[p_{k,\ell}]\}$ the set of $1$-harmonic functions, once the jumps $\{p_{k,\ell}\}$ have been fixed. For any $a$ such that $\phi(a)=t$, we define new weights as follows (with $\langle \cdot,\cdot\rangle$ denoting the standard scalar product in ${\bf R}^2$)
\begin{equation}
\label{eq:pkla}
     p_{k,\ell}^{a} = p_{k,\ell} e^{\langle a,(k,\ell)\rangle} t^{-1}.
\end{equation}
The identity $\phi(a)=t$ implies that $\sum_{k,\ell} p_{k,\ell}^{a} =1$, and thus the $\{p_{k,\ell}^{a}\}$ can be interpreted as transition probabilities. Our main result in Section~\ref{sec:general_case} is the following:
\begin{proposition}
\label{prop:t-harmonic_second_approach}
Assume that the $\{p_{k,\ell}\}$ have all exponential moments. Then the set of $t$-harmonic functions is equal to 
\begin{equation*}
     \{(i,j)\mapsto e^{\langle a_t,(i,j)\rangle} f[p_{k,\ell}^{a_t}](i,j)\},
\end{equation*}
for any $a_t$ such that $\phi(a_t)=t$.
\end{proposition}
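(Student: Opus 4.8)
The plan is to establish an exact correspondence between $t$-harmonic functions for the original weights $\{p_{k,\ell}\}$ and $1$-harmonic functions for the exponentially tilted weights $\{p_{k,\ell}^{a_t}\}$, via the substitution $f(i,j) = e^{\langle a_t,(i,j)\rangle} g(i,j)$. First I would verify the algebraic heart of the claim: plugging this ansatz into the $t$-harmonicity condition and checking that it is equivalent to the $1$-harmonicity of $g$ for the tilted weights. Concretely, for $i,j\geq 1$,
\begin{equation*}
     L_t(f)(i,j) = \sum_{k,\ell} p_{k,\ell}\, e^{\langle a_t,(i+k,j+\ell)\rangle} g(i+k,j+\ell) - t\, e^{\langle a_t,(i,j)\rangle} g(i,j),
\end{equation*}
and factoring out $e^{\langle a_t,(i,j)\rangle}$ gives
\begin{equation*}
     L_t(f)(i,j) = e^{\langle a_t,(i,j)\rangle}\left( \sum_{k,\ell} p_{k,\ell}\, e^{\langle a_t,(k,\ell)\rangle} g(i+k,j+\ell) - t\, g(i,j)\right).
\end{equation*}
Using the definition~\eqref{eq:pkla}, namely $p_{k,\ell}\, e^{\langle a_t,(k,\ell)\rangle} = t\, p_{k,\ell}^{a_t}$, the bracket becomes $t\bigl(\sum_{k,\ell} p_{k,\ell}^{a_t} g(i+k,j+\ell) - g(i,j)\bigr)$, which is exactly $t$ times the $1$-Laplacian of $g$ for the tilted weights. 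Since $e^{\langle a_t,(i,j)\rangle}>0$ and $t>0$, we get $L_t(f)(i,j)=0$ if and only if $g$ is $1$-harmonic in the interior for $\{p_{k,\ell}^{a_t}\}$.

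Next I would check that the remaining defining properties \ref{prop2} and \ref{prop3} are preserved in both directions under the bijection $f\leftrightarrow g$. Positivity in the interior is immediate because the exponential factor is strictly positive, so $f(i,j)>0 \Leftrightarrow g(i,j)>0$ for $i,j\geq 1$. The vanishing condition on the boundary and exterior is equally transparent: for $(i,j)$ with $i\leq 0$ and/or $j\leq 0$, $f(i,j)=0 \Leftrightarrow g(i,j)=0$, again because $e^{\langle a_t,(i,j)\rangle}\neq 0$. Thus the map $g\mapsto e^{\langle a_t,(i,j)\rangle} g$ is a bijection from the set of $1$-harmonic functions $f[p_{k,\ell}^{a_t}]$ onto the set of $t$-harmonic functions for $\{p_{k,\ell}\}$, with inverse $f\mapsto e^{-\langle a_t,(i,j)\rangle} f$. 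This is precisely the set equality claimed in the proposition.

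One point that requires a word of justification is that the tilted weights $\{p_{k,\ell}^{a_t}\}$ are genuinely a probability distribution, so that $f[p_{k,\ell}^{a_t}]$ is well defined as a set of $1$-harmonic functions: this is guaranteed by the normalization $\sum_{k,\ell} p_{k,\ell}^{a_t}=1$, which follows from $\phi(a_t)=t$ as noted just before the statement. I would also remark that the existence of some $a_t$ with $\phi(a_t)=t$ is ensured for $t\geq t_0$ by the convexity and coercivity of $\phi$ (the exponential moment assumption makes $\phi$ finite on all of ${\bf R}^2$), and that the resulting set does not depend on the particular choice of $a_t$ because the left-hand side (the set of $t$-harmonic functions) is defined intrinsically.

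The main obstacle is conceptual rather than computational: the algebraic identity is essentially a one-line factorization, so the real content is to confirm that the tilting is an honest bijection at the level of the full defining conditions \ref{prop1}--\ref{prop3}, and in particular that no harmonic functions are lost or gained at the boundary. Since the exponential factor never vanishes, this preservation is automatic, and there is no subtlety hiding in the interior-versus-boundary distinction. Hence the proof is short, and its usefulness lies in that it reduces the determination of the $t$-Martin boundary to the already-known case $t=1$, as exploited in the second proof of Theorem~\ref{thm:main_1}.
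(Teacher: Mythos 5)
Your proof is correct and follows essentially the same route as the paper: the paper encapsulates your one-line factorization in Lemma~\ref{lem:change_harmonic} (the correspondence $f\mapsto f^{a_t}=f\,e^{-\langle a_t,(i,j)\rangle}$ being $1$-harmonic for the tilted weights $\{p_{k,\ell}^{a_t}\}$) and declares the proposition a direct consequence, exactly as you argue. Your additional checks (preservation of positivity and of the boundary vanishing, normalization of the tilted weights) are the same observations the paper leaves implicit, so there is nothing to flag.
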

Proposition~\ref{prop:t-harmonic_second_approach} is a direct consequence of the following simple correspondence between $t$-harmonic and $1$-harmonic functions. Hereafter, we shall denote by $f^{a}$ ($a\in{\bf R}^2$) the function 
\begin{equation}
\label{eq:new_harmonic}
     f^{a}(i,j) = f(i,j)e^{-\langle a,(i,j)\rangle}.
\end{equation}

\begin{lemma}
\label{lem:change_harmonic}
For any $a_t$ such that $\phi(a_t)=t$ and any $t$-harmonic function $f$, $f^{a_t}$ is $1$-harmonic w.r.t.\ the weights $\{p_{k,\ell}^{a_t}\}$. 
\end{lemma}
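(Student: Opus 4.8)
The plan is to verify directly that the exponential change of measure in~\eqref{eq:new_harmonic} transforms the $t$-harmonicity equation $L_t(f)=0$ into the $1$-harmonicity equation for the reweighted jumps $\{p_{k,\ell}^{a_t}\}$, and that the boundary/positivity conditions are preserved. This is a computation, so the real content is just bookkeeping with the factor $e^{-\langle a_t,(i,j)\rangle}$.

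First I would write out what it means for $f$ to be $t$-harmonic: for all $i,j\geq 1$,
\begin{equation*}
     \sum_{k,\ell} p_{k,\ell}\, f(i+k,j+\ell) = t\cdot f(i,j).
\end{equation*}
Then I would substitute $f(i+k,j+\ell) = f^{a_t}(i+k,j+\ell)\, e^{\langle a_t,(i+k,j+\ell)\rangle}$ and $f(i,j) = f^{a_t}(i,j)\,e^{\langle a_t,(i,j)\rangle}$, using the definition~\eqref{eq:new_harmonic}. The key algebraic step is to factor out the common exponential $e^{\langle a_t,(i,j)\rangle}$ from every term: since $\langle a_t,(i+k,j+\ell)\rangle = \langle a_t,(i,j)\rangle + \langle a_t,(k,\ell)\rangle$, dividing the whole equation by $e^{\langle a_t,(i,j)\rangle}$ yields
\begin{equation*}
     \sum_{k,\ell} p_{k,\ell}\, e^{\langle a_t,(k,\ell)\rangle}\, f^{a_t}(i+k,j+\ell) = t\cdot f^{a_t}(i,j).
\end{equation*}
Dividing by $t$ and recalling the definition~\eqref{eq:pkla} of $p_{k,\ell}^{a_t} = p_{k,\ell}\, e^{\langle a_t,(k,\ell)\rangle}\, t^{-1}$, this is exactly $\sum_{k,\ell} p_{k,\ell}^{a_t}\, f^{a_t}(i+k,j+\ell) = f^{a_t}(i,j)$, i.e.\ $f^{a_t}$ is $1$-harmonic for the weights $\{p_{k,\ell}^{a_t}\}$ on the interior. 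Here I would invoke the identity $\phi(a_t)=t$ together with~\eqref{eq:def_phi} to confirm that the $\{p_{k,\ell}^{a_t}\}$ indeed sum to $1$, so that the equation is genuinely a $1$-harmonicity condition (and $f^{a_t}$ is harmonic for an honest Markov kernel).

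Finally I would check conditions~\ref{prop2} and~\ref{prop3}: multiplication by the strictly positive factor $e^{-\langle a_t,(i,j)\rangle}$ preserves positivity on the interior and preserves vanishing on the boundary and exterior, so $f^{a_t}$ satisfies the same three defining properties with respect to the new weights. There is no genuine obstacle here—the statement is essentially an observation—so the only point demanding care is the direction of the exponent: one must ensure that $f$ being $t$-harmonic corresponds to dividing by $e^{\langle a_t,(i,j)\rangle}$ (equivalently multiplying $f$ by $e^{-\langle a_t,(i,j)\rangle}$ as in~\eqref{eq:new_harmonic}), and that the shift absorbs precisely the factor appearing in~\eqref{eq:pkla}. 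The computation runs identically in both directions, which also gives the converse used implicitly in Proposition~\ref{prop:t-harmonic_second_approach}.
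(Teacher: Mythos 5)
Your proof is correct, and it is exactly the argument the paper has in mind: the paper states Lemma~\ref{lem:change_harmonic} without proof, calling it a ``simple correspondence,'' and your direct computation---factoring $e^{\langle a_t,(i,j)\rangle}$ out of the $t$-harmonicity equation and using $\phi(a_t)=t$ to see that the $\{p_{k,\ell}^{a_t}\}$ sum to $1$---is precisely that correspondence. Your additional checks of positivity and of the vanishing on the boundary, as well as the remark that the computation is reversible (which is what Proposition~\ref{prop:t-harmonic_second_approach} needs), are accurate and complete the picture.
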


As a consequence of Proposition~\ref{prop:t-harmonic_second_approach} and Lemma~\ref{lem:change_harmonic}, we can reprove and extend Theorem~\ref{thm:main_1}.

\begin{corollary}
\label{cor:extended}
Theorem~\ref{thm:main_1}, initially proved for small steps random walks, can be generalized as follows:
\begin{itemize}
\item Theorem~\ref{thm:main_1}~\ref{thm:main_1_1} to random walks whose increments have exponential moments,
\item Theorem~\ref{thm:main_1}~\ref{thm:main_1_2} to random walks with bounded symmetric jumps. 
\end{itemize}
\end{corollary}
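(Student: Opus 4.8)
The plan is to derive Corollary~\ref{cor:extended} as a straightforward consequence of the correspondence between $t$-harmonic and $1$-harmonic functions established in Proposition~\ref{prop:t-harmonic_second_approach} and Lemma~\ref{lem:change_harmonic}. The key observation is that the map $f\mapsto f^{a_t}$ in~\eqref{eq:new_harmonic} is a bijection between $t$-harmonic functions for the weights $\{p_{k,\ell}\}$ and $1$-harmonic functions for the weights $\{p_{k,\ell}^{a_t}\}$, and that this map preserves positivity (since $e^{-\langle a_t,(i,j)\rangle}>0$) and sends the zero boundary conditions to zero boundary conditions. Thus it induces a homeomorphism between the corresponding $t$-Martin and $1$-Martin boundaries, so it suffices to transport the known description of the $1$-Martin boundary for $\{p_{k,\ell}^{a_t}\}$ through this correspondence.

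First I would treat case~\ref{thm:main_1_1}. Given $t>t_0$, strict convexity of $\phi$ and $t_0=\min\phi$ guarantee that the level set $\{\phi=t\}=\partial D_t$ is non-empty; pick any $a_t$ on it. The exponentially tilted weights $\{p_{k,\ell}^{a_t}\}$ sum to $1$ by the identity $\phi(a_t)=t$, and they inherit exponential moments from $\{p_{k,\ell}\}$. Invoking \cite[Corollary 1.1]{IL10}, which gives that the ($1$-)Martin boundary of such walks killed in ${\bf N}^2$ is homeomorphic to a segment with non-empty interior, I would conclude via the bijection above that the $t$-Martin boundary for $\{p_{k,\ell}\}$ is likewise homeomorphic to a segment $S_t$. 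The only point requiring care is that \cite{IL10} applies to walks with a drift, so one must note that for $t>t_0$ the tilted walk $\{p_{k,\ell}^{a_t}\}$ indeed has non-zero drift: its drift is proportional to $\nabla\phi(a_t)$, which is non-zero precisely because $a_t$ lies strictly above the minimum of the strictly convex $\phi$.

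Next I would treat case~\ref{thm:main_1_2}, where $t=t_0$. Now $a_{t_0}$ is the unique minimizer of $\phi$, so $\nabla\phi(a_{t_0})=0$ and the tilted walk $\{p_{k,\ell}^{a_{t_0}}\}$ has zero drift. Under the hypothesis of bounded symmetric jumps, \cite[Theorem~1]{BMS15} asserts that the zero-drift walk killed in the quadrant admits a unique positive $1$-harmonic function up to scaling, i.e.\ its Martin boundary is a single point. Transporting this through the bijection yields that the $t_0$-Martin boundary of $\{p_{k,\ell}\}$ is reduced to one point. I would check that the symmetry and boundedness hypotheses needed for \cite{BMS15} are preserved (or suitably interpreted) under the tilt by $a_{t_0}$; symmetry of the original jumps forces $a_{t_0}=0$, so the tilt is trivial and the hypotheses transfer verbatim. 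The main obstacle throughout is not any single computation but ensuring that the cited external results apply to the tilted weights — in particular verifying the drift dichotomy ($\nabla\phi(a_t)\neq 0$ for $t>t_0$ versus $=0$ for $t=t_0$) and confirming that the regularity assumptions (exponential moments, resp.\ bounded symmetric jumps) are genuinely inherited by $\{p_{k,\ell}^{a_t}\}$.
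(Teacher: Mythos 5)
Your proposal is correct and follows essentially the same route as the paper: exponential tilting via Proposition~\ref{prop:t-harmonic_second_approach} and Lemma~\ref{lem:change_harmonic}, then the dichotomy $t>t_0$ (tilted walk has non-zero drift, invoke \cite[Corollary 1.1]{IL10}) versus $t=t_0$ (tilted walk has zero drift, invoke \cite[Theorem 1]{BMS15}), which is exactly the paper's case split on whether $\phi(a_t)=t$ has one or infinitely many solutions. Your additional checks — that the drift of the tilted walk is proportional to $\nabla\phi(a_t)$, and that symmetry of the original jumps forces $a_{t_0}=0$ so the hypotheses of \cite{BMS15} transfer verbatim — are refinements the paper leaves implicit, not a different argument.
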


\begin{proof}
We first assume that the equation $\phi(a_t)=t$ has a unique solution. In this case $a_t$ is the global minimizer of $\phi$ on ${\bf R}^2$ and the new weights $\{p_{k,\ell}^{a_t}\}$ have zero drift (this corresponds to $\phi'(a_t)=0$). For random walks with bounded symmetric jumps, there exists a unique $f[p_{k,\ell}^{a_t}]$ which is $1$-harmonic (up to multiplicative factors), see \cite[Theorem 1]{BMS15}. Corollary~\ref{cor:extended} follows in this case.

We now suppose that the equation $\phi(a_t)=t$ has more than one solution (and then in fact, infinitely many). In this case the $\{p_{k,\ell}^{a_t}\}$ have non-zero drift (independently of $a_t$). For any choice of $a_t$, we can use the result \cite[Corollary 1.1]{IL10} for $t=1$ (valid for random walks whose increments have exponential moments), and then with~\eqref{eq:new_harmonic} and Lemma~\ref{lem:change_harmonic} we transfer it to other values of $t>t_0$.\qed
\end{proof}

We now present some remarks and consequences of Proposition~\ref{prop:t-harmonic_second_approach}:
\begin{itemize}
     \item Proposition~\ref{prop:t-harmonic_second_approach} is independent of the choice of $a_t$.
     \item Proposition~\ref{prop:t-harmonic_second_approach} is not only a result on the structure of the Martin boundary, it also provides an expression of the $t$-harmonic functions in terms of the $1$-harmonic functions.
     \item The exponential factor in~\eqref{eq:new_harmonic} does not affect the fact that on the boundary of the quadrant, the functions $f$ and $f^{a}$ are $0$. Incidentally, this explains that the simple exponential change~\eqref{eq:new_harmonic} cannot be used in other situations than killed random walks, like reflected random walks on a quadrant (see \cite{IR10} for the study of the $t$-Martin boundary of reflected random walks on a half-space).
\end{itemize}

\section{Miscellaneous} 
\label{sec:misc}

\subsection{Stable Martin boundaries}

According to \cite[Definition 2.4]{PW}, the Martin boundary is stable if the Martin compactification does not depend on the eigenvalue $t$ (with a possible exception at the critical value) and if the Martin kernels are jointly continuous w.r.t.\ space variable and eigenvalue.

The first item is clearly satisfied in our context (see our Theorem~\ref{thm:main_1}). As for the second one, it does not formally come from our results. However, it is most probably true (in this direction, see Section~\ref{sec:remarks}, where we show that the harmonic functions are continuous w.r.t.\ the eigenvalue $t$). For small steps random walks and $t=1$, it is proved in \cite[Remark 29]{KR11} that the Martin kernel is continuous w.r.t.\ the space variable.

\subsection{Transformations of the step set and consequences on harmonic functions}

It is natural to make some transformations of the step set, as $\{p_{k,\ell}\}\to \{p_{\pm k,\pm \ell}\}$, and to see the effect on the harmonic functions $\{f(i,j)\}$. In fact, the consequence will be simpler to read on the generating functions $H(x,0)$ and $H(0,y)$, without obvious implications on the coefficients $\{f(i,j)\}$.

The starting point of all our approach is the functional equation~\eqref{eq:functional_equation}, and the difference between two functional equations associated with different jumps is all contained in the kernel~\eqref{eq:def_L}.

Consider first the transformation $\{p_{k,\ell}\}\to \{p_{-k,\ell}\}$. The new kernel is $x^2L(1/x,y)$, with new branch points in $x$ equal to the $1/x_\ell$, while the $y_\ell$ remain the same. The new roots of the kernel are $1/X(y)$ and $Y(1/x)$. The curve $\mathcal L$ is the same, and the new $\mathcal M$ is obtained by an inversion. 

The new conformal mapping is an algebraic function of $w$. To find it we can proceed as in the proof of Theorem~\ref{thm:main_2_complete}, by compensating the poles of $w$ in the new curve $\mathcal M$ (see \eqref{eq:after_compensation}). Changing accordingly the values of the constants $\alpha$ and $\beta$ and replacing $L(x,0)$ by $x^2L(1/x,0)$ yields the correct statement of Theorem~\ref{thm:main_2_complete} for the step set $\{p_{-k,\ell}\}$.

Regarding the transformation $\{p_{k,\ell}\}\to \{p_{k,-\ell}\}$, $w$ takes the same value but $L(x,0)$, which is equal to $\gamma(x)$ in the case $\{p_{k,\ell}\}$, should be $\alpha(x)$.

Similar facts can be obtained for other transformations, or for the symmetry $\{p_{k,\ell}\}\to \{p_{\ell,k}\}$.

\subsection{Simple random walks}

If $p_{0,1}+p_{1,0}+p_{0,-1}+p_{-1,0}=1$, the minimal $t$-harmonic functions take the form (with $p\in[x_2,X(y_2)]$ and $p'=Y_0(p)$)
\begin{equation}
\label{eq:expression_harmonic_coefficients}
     f_p (i,j)=\left\{\begin{array}{ll}
     \big\{\big(\frac{1}{p}\big)^{i}-\big(\frac{p_{-1,0}}{p_{1,0}}p\big)^{i}\big\}j\big(\frac{1}{p'}\big)^{j}& \text{if }  p = x_2,\\
     \big\{\big(\frac{1}{p}\big)^{i}-\big(\frac{p_{-1,0}}{p_{1,0}}p\big)^{i}\big\}\big\{\big(\frac{1}{p'}\big)^{j}-\big(\frac{p_{0,-1}}{p_{0,1}}p'\big)^{j}\big\}    & \text{if }  p\in(x_2,X(y_2)),\\
     i\big(\frac{1}{p}\big)^{i}\big\{\big(\frac{1}{p'}\big)^{j}-\big(\frac{p_{0,-1}}{p_{0,1}}p'\big)^{j}\big\}& \text{if }  p=X(y_2).
     \end{array}\right.
\end{equation}
In the particular case $t=1$, Equation~\eqref{eq:expression_harmonic_coefficients} is obtained in \cite[Section 5.1]{KR11}. By using techniques coming from representation theory, the authors of \cite{LPP13} have obtained an explicit expression for one $1$-harmonic function, the one equal to the probability of never hitting the cone.

The explicit expression~\eqref{eq:expression_harmonic_coefficients} could also be obtained from Section~\ref{sec:expression_w} (via the computation of the generating functions $H_p(x,y)$), where we derive an expression for the function $w$.

\subsection{Generating functions of discrete harmonic functions as Tutte's invariants}

The equality $L(x,0)H(x,0)=L(\overline{x},0)H(\overline{x},0)$ for $x\in\mathcal M$ (Lemma~\ref{lem:boundary_condition}) implies that, in the terminology of \cite{Be}, $L(x,0)H(x,0)$ is a Tutte's invariant (these invariants were introduced in the 1970s, when studying properly $q$-colored triangulations, see \cite{Tutte}). 

In \cite{Be} the authors identity some models of quadrant walks such that their generating function can be expressed in terms of such Tutte's invariants. This illustrates that our results do not only concern Martin boundary theory, but also combinatorial problems as the enumeration of walks in the quarter plane.

\subsection{A conjecture}

Our conjecture is that Theorem~\ref{thm:main_1}, a priori valid only for a subclass of jumps $\{p_{k,\ell}\}$ with exponential moments, can be extended as follows:
\begin{conjecture}
Theorem~\ref{thm:main_1} is valid for any $\{p_{k,\ell}\}$ such that $\sum_{k,\ell}(k^2+\ell^2) p_{k,\ell}<\infty$ {\rm(}i.e., with moments of order $2${\rm)}.
\end{conjecture}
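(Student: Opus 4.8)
The plan is to treat the three regimes of Theorem~\ref{thm:main_1} separately and to replace both of the proofs given above, since each relies on hypotheses stronger than a finite second moment: the complex-analytic proof needs the small-steps assumption~\ref{prop4} so that the kernel~\eqref{eq:def_L} is a genuine quadratic in each variable, while the proof of Section~\ref{sec:general_case} needs $\phi$ in~\eqref{eq:def_phi} to be finite on all of ${\bf R}^2$ so that the tilted weights~\eqref{eq:pkla} define probabilities. A more robust probabilistic scheme is therefore required.

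The first task is to pin down the correct meaning of $t_0$. Under only a second moment, $\phi$ may be infinite off a neighbourhood of the origin, so $t_0=\min\phi$ is unavailable; instead $t_0$ should be defined as the convergence parameter (in the sense of Vere--Jones) of the killed kernel, which always exists and coincides with $\min\phi$ when the latter is finite. One must then verify that $t_0\in(0,1]$ continues to hold and clarify whether the dichotomy ``$t_0=1$ iff the drift is zero'' survives. This is already delicate for heavy tails, since the decay of killed return probabilities inside the quadrant may become subexponential, and a local limit theorem for the confined walk is the natural tool to settle it. Once $t_0$ is correctly defined, regime~\ref{thm:main_1_3} ($t<t_0$, empty boundary) is again immediate from the general theory of Pringsheim/Vere--Jones~\cite{Pr64}, requiring no extra moment input.

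For the two remaining regimes I would use an approximation argument. Given $\{p_{k,\ell}\}$ with finite second moment, introduce step sets $\{p_{k,\ell}^{(n)}\}$ with exponential moments (for instance by truncating the support at radius $n$ and renormalising), so that Theorem~\ref{thm:main_1}, already available through Corollary~\ref{cor:extended}, applies to each $n$. One then passes to the limit in the Martin compactification: the critical values $t_0^{(n)}$ should converge to $t_0$, and for fixed $t>t_0$ the segment $S_t^{(n)}$, parametrised as in Lemma~\ref{lem:exp_growth_t=t} by the exponential growth of the boundary values, should converge to a segment $S_t$. The critical case $t=t_0$ is more awkward, because the approximating walk need not be critical at $t$; here one would either let $t=t_0^{(n)}$ vary with $n$, or exploit continuity of the harmonic functions in $t$, and extend the uniqueness of~\cite{BMS15} from bounded symmetric jumps to second-moment jumps via a Denisov--Wachtel type coupling against the harmonic function of Brownian motion in the quadrant.

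The hard part is the limit procedure itself, since the Martin compactification is notoriously unstable under perturbation of the kernel. One must prove simultaneously that every minimal $t$-harmonic function of the limit walk arises as a limit of minimal functions of the $\{p_{k,\ell}^{(n)}\}$, and that no spurious boundary point is created in the limit; equivalently, one needs ratio limit theorems and Green-function asymptotics that are uniform in $n$ and valid under only a second moment. These are precisely the estimates that~\cite{IL10,KR11} establish under exponential moments, and reproving them with polynomial tails—through large-deviation bounds tolerating heavy tails in regime~\ref{thm:main_1_1}, and through coupling and central-limit estimates in the critical regime~\ref{thm:main_1_2}—is where essentially all the difficulty of the conjecture is concentrated.
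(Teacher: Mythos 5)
The statement you are addressing is a \emph{conjecture}: the paper offers no proof of it, and explicitly presents it as open (noting only that the zero-drift case $t=t_0=1$ was already conjectured in \cite[Conjecture 1]{Ra14}). So there is no proof in the paper to compare against; the only question is whether your proposal actually closes the problem. It does not. What you have written is a research program in which every genuinely hard step is named, correctly diagnosed as hard, and then left unexecuted. You say yourself that the uniform-in-$n$ ratio limit theorems and Green-function asymptotics under only a second moment are ``where essentially all the difficulty of the conjecture is concentrated'' --- but those estimates are precisely what would constitute a proof, and they are nowhere supplied. A proof cannot consist of an inventory of the lemmas one would need.

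Concretely, three gaps are fatal. First, the redefinition of $t_0$ as the Vere--Jones convergence parameter is reasonable, but you do not verify that $t_0\in(0,1]$, nor that the dichotomy ``$t_0=1$ iff zero drift'' survives, nor that this $t_0$ governs the emptiness threshold of regime~\ref{thm:main_1_3} when $\phi$ is infinite off a neighbourhood of the origin; you only remark that a local limit theorem ``is the natural tool.'' Second, the truncation scheme $\{p_{k,\ell}^{(n)}\}$ founders on exactly the instability you acknowledge: convergence of kernels does not imply convergence of Martin compactifications, and nothing in your argument rules out spurious boundary points in the limit or shows that every minimal $t$-harmonic function of the limiting walk is a limit of minimal functions of the truncated walks. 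Establishing this would require the heavy-tail analogues of the estimates of \cite{IL10,KR11}, i.e., assuming the very content of the conjecture. Third, in the critical regime you invoke an extension of the uniqueness theorem of \cite{BMS15} from bounded symmetric jumps to arbitrary second-moment jumps ``via a Denisov--Wachtel type coupling''; no such coupling is constructed, and this extension is itself an open problem of comparable depth. The proposal is a sensible map of the territory, but the territory remains uncrossed: the conjecture is exactly as open after your argument as before it.
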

In the particular case of zero drift jumps $\{p_{k,\ell}\}$ (i.e., $t=t_0=1$), this conjecture is stated in \cite[Conjecture 1]{Ra14}.

\section*{Acknowledgments}
We would like to thank Irina Ignatiouk-Robert for very interesting discussions, in particular for suggesting Proposition~\ref{prop:t-harmonic_second_approach} and Lemma~\ref{lem:change_harmonic}. We acknowledge support from the project MADACA of the R\'egion Centre. The second author would like to thank Gerold Alsmeyer and the Institut f\"ur Mathematische Statistik (Universit\"at M\"unster, Germany), where the project has started. We finally thank a referee for useful comments.

%
%

\begin{thebibliography}{99.}%
%
%
\bibitem{Be}
Bernardi, O., Bousquet-M\'elou, M. and Raschel, K.:
Counting quadrant walks via Tutte's invariant method.
{\it Preprint arXiv:1511.04298}, 1--13 (2015)

\bibitem{Bi1}
Biane, P.:
\newblock Quantum random walk on the dual of {${\rm SU}(n)$}.
\newblock{\em Probab. Theory Related Fields} {\bf 89}, 117--129 (1991)

\bibitem{Bi3}
Biane, P.:
\newblock Minuscule weights and random walks on lattices.
\newblock In {\em Quantum probability \& related topics},
51--65. World Sci. Publ., River Edge, NJ (1992)

\bibitem{BMS15}
Bouaziz, A., Mustapha, S.  and Sifi, M.:
Discrete harmonic functions on an orthant in ${\bf Z}^d$.
{\it Electron. Commun. Probab.} {\bf20}, 1--13 (2015)

\bibitem{FIM}
Fayolle, G., Iasnogorodski, R. and Malyshev, V.:
\newblock {\em Random walks in the quarter plane}.
\newblock Springer-Verlag, Berlin (1999)

\bibitem{FR11}
Fayolle, G. and Raschel, K.:
\newblock Random walks in the quarter-plane with zero drift: an explicit criterion for the finiteness of the associated group. 
\newblock{\em Markov Process. Related Fields} {\bf17}, 619--636 (2011)

\bibitem{He63}
Hennequin, P.-L.:
Processus de Markoff en cascade. 
{\it Ann. Inst. H. Poincar\'e} {\bf18}, 109--195 (1963)

\bibitem{IR0}
Ignatiouk-Robert, I.: 
Martin boundary of a killed random walk on a half-space. 
{\it J. Theoret. Probab.} {\bf 21}, 35--68 (2008)

\bibitem{IR2}
Ignatiouk-Robert, I.: 
Martin boundary of a killed random walk on ${\bf Z}^d$.
{\it Preprint arXiv:0909.3921}, 1--49 (2009)

\bibitem{IR1}
Ignatiouk-Robert, I.: 
Martin boundary of a reflected random walk on a half-space. 
{\it Probab. Theory Related Fields} {\bf 148}, 197--245 (2010)

\bibitem{IR10}
Ignatiouk-Robert, I.:
$t$-Martin boundary of reflected random walks on a half-space. 
{\em Electron. Commun. Probab.} {\bf15}, 149--161 (2010)

\bibitem{IL10}
Ignatiouk-Robert, I. and Loree, C.:
Martin boundary of a killed random walk on a quadrant. 
{\it Ann. Probab.} {\bf38}, 1106--1142 (2010)

\bibitem{KM98}
Kurkova, I. and Malyshev, V.:
Martin boundary and elliptic curves.
{\it Markov Process. Related Fields} {\bf4}, 203--272 (1998)

\bibitem{KR11}
Kurkova, I. and Raschel, K.: 
Random walks in ${\bf Z}_+^2$ with non-zero drift absorbed at the axes. 
{\it Bull. Soc. Math. France} {\bf139}, 341--387 (2011)

\bibitem{LPP13}
Lecouvey, C., Lesigne, E. and Peign\'e, M.: 
Random walks in Weyl chambers and crystals. 
{\it Proc. Lond. Math. Soc.} {\bf104}, 323--358 (2012)

\bibitem{Li00}
Litvinchuk, G.:
{\it Solvability theory of boundary value problems and singular integral equations with shift.} 
Kluwer Academic Publishers, Dordrecht (2000)

\bibitem{PW}
Picardello, M. and Woess, W.:
{Martin boundaries of {C}artesian products of {M}arkov chains}.
{\it Nagoya Math. J.} {\bf 128}, 153--169 (1992)

\bibitem{Pr64}
Pruitt, W.: 
Eigenvalues of nonnegative matrices. 
{\it Annals of Math. Statistics} {\bf35}, 1797--1800 (1964)

\bibitem{Ra14}
Raschel, K.\ (with an appendix by Franceschi, S.):
Random walks in the quarter plane, discrete harmonic functions and conformal mappings.
{\it  Stochastic Process. Appl.} {\bf124}, 3147--3178 (2014)

\bibitem{Tutte}
Tutte, W: 
Chromatic sums for rooted planar triangulations. V. Special equations. 
{\it Canad. J. Math.} {\bf26}, 893--907 (1974)


\end{thebibliography}
%

\appendix

\section{Appendix: Explicit expressions for $w$}
\label{sec:expression_w}

It turns out that a suitable expression for $w$ has been found in \cite[Section 5.5.2]{FIM}. Let us briefly explain why such a function appears in \cite{FIM}. The main goal of \cite{FIM} is to develop a theory for solving a functional equation satisfied by the stationary probabilities generating function of reflected random walks in the quarter plane. The functional equation in \cite{FIM} is closed to ours (compare \cite[Equation (1.3.6)]{FIM} with~\eqref{eq:functional_equation}). Roughly speaking, the general solution of \cite{FIM} can be expressed as 
\begin{equation*}
     \int f(y)\frac{w'(y)}{w(x)-w(y)}\text{d}y
\end{equation*}
for some function $f$, with the same function $w$ as ours. Our situation is therefore simpler, since we can express the solutions directly in terms of $w$, without any integral.

In this section we simply state the expression of $w$, and we refer to \cite[Chapter 5]{FIM} or to \cite[Section 3]{KR11} for the details. The expression is
\begin{equation}
\label{eq:expression_w}
     w(x)=\frac{u(x_0)}{u(x)-u(x_0)}-\frac{u(x_0)}{u(0)-u(x_0)} 
\end{equation}
(the second term $\frac{u(x_0)}{u(0)-u(x_0)}$ in~\eqref{eq:expression_w} is to ensure that $w(0)=0$), where the function $u$ is different in the two cases $t\in(t_0,\infty)$ and $t=t_0$. 

\subsection{Case $t\in(t_0,\infty)$}
In that case, $u$ can be expressed in terms of Weiertrass elliptic functions, with the formula
\begin{equation}
\label{eq:expression_u}
     u(x)=\wp_{1,3}(s^{-1}(x)-\omega_2/2),
\end{equation}
where 
\begin{itemize}
\item $\wp_{1,3}$ is the Weierstrass elliptic function associated with the periods $\omega_1$ and $\omega_3$ defined in~\eqref{eq:expression_omega_1_2_3}, i.e., 
\begin{equation*}
     \wp_{1,3}(\omega) = \frac{1}{\omega^2}+\sum_{n_1,n_3\in{\bf Z}}\left\{ \frac{1}{(\omega-n_1\omega_1-n_3\omega_3)^2}-\frac{1}{(n_1\omega_1+n_3\omega_3)^2}\right\},
\end{equation*}
\item $\omega_1$ and $\omega_2$ are defined as below (with $\delta$ as in~\eqref{def_d}):
\begin{equation}
\label{eq:expression_omega_1_2_3}
     \omega_1= i\int_{x_1}^{x_2}\frac{\text{d}x}{\sqrt{-\delta(x)}},\qquad
     \omega_2 = \int_{x_2}^{x_3} \frac{\text{d}x}{\sqrt{\delta(x)}},\qquad
     \omega_3 = \int_{X(y_1)}^{x_1}\frac{\text{d}x}{\sqrt{\delta(x)}},
\end{equation}
\item $s(\omega)=g^{-1}(\wp_{1,2}(\omega))$, where $\wp_{1,2}$ is the Weierstrass elliptic function associated with the periods $\omega_1$ and $\omega_2$, and $g^{-1}$ is the reciprocal function of 
\begin{equation}
\label{eq:function_g}
g(x)=\left\{\begin{array}{lll}
          \displaystyle \frac{\delta''(x_{4})}{6}+\frac{\delta'(x_{4})}{x-x_{4}}& \text{if} & x_{4}\neq \infty,\vspace{1.5mm}\\
          \displaystyle \frac{\delta''(0)}{6}+\frac{\delta'''(0)x}{6} & \text{if} & x_{4}=\infty,\end{array}\right.
\end{equation}
\item $x_0\in (X(y_1),x_2)\setminus\{0\}$ is arbitrary.
\end{itemize}
   
\subsection{Case $t=t_0$}   
We have
\begin{equation}
\label{eq:expression_u_0}
     u(x)=\left(\frac{\pi}{\omega_3}\right)^{2}\left\{\sin\left(\frac{\pi}{\theta}\left[\arcsin\left(\frac{1}{\sqrt{\frac{1}{3}-\frac{2g(x)}{\delta''(1)}}}\right)-\frac{\pi}{2}\right]\right)^{-2}-\frac{1}{3}\right\},
\end{equation}
with $g$ as in~\eqref{eq:function_g} and 
  \begin{equation*}
          \theta=\arccos \left(-\frac{\sum_{-1\leq i,j\leq 1}i j p_{i,j}x_2^i y_2^j}
          {2\sqrt{\alpha(x_2)\widetilde \alpha(y_2)}}\right).
     \end{equation*}
     
\subsection{Remarks}
\label{sec:remarks}
It can be shown that:
\begin{itemize}
     \item The expressions given in~\eqref{eq:expression_u} and~\eqref{eq:expression_u_0} are a priori complicated, but it may happen that for some $\{p_{k,\ell}\}$, they become much simpler. If $p_{0,1}+p_{1,0}+p_{0,-1}+p_{-1,0}=1$ for instance, the function $u$ is rational. More generally, if $\omega_2/\omega_3\in{\bf Q}$, then $u$ is an algebraic function. See \cite[Proposition 15 and Remark 16]{KR11} for further remarks on~$u$.
     \item The function $u$ is is continuous w.r.t.\ the eigenvalue $t\in[t_0,\infty)$, see \cite[Section 2.2]{FR11}.
     \item At $t=t_0$ we have $x_2=x_3$ and $y_2=y_3$ (in fact $t_0=\inf\{t>0 : x_2 = x_3\}=\inf\{t>0 : y_2 = y_3\}$).
\end{itemize}

\end{document}